\documentclass[10pt]{article}
\usepackage{amsmath}
\usepackage{amssymb}
\usepackage{amsthm}
\usepackage{mathrsfs}
\numberwithin{equation}{section}

\begin{document}
\title{Boundedness of intrinsic square functions on generalized Morrey spaces}
\author{Hua Wang \footnote{E-mail address: wanghua@pku.edu.cn.}\\
\footnotesize{Department of Mathematics, Zhejiang University, Hangzhou 310027, China}}
\date{}
\maketitle

\begin{abstract}
In this paper, we will study the strong type and weak type estimates of intrinsic square functions including the Lusin area integral, Littlewood-Paley $g$-function and $g^*_\lambda$-function on the generalized Morrey spaces $L^{p,\Phi}$ for $1\le p<\infty$, where $\Phi$ is a growth function on $(0,\infty)$ satisfying the doubling condition. The boundedness of the commutators generated by $BMO(\mathbb R^n)$ functions and intrinsic square functions is also obtained.\\
MSC(2010): 42B25; 42B35\\
Keywords: Intrinsic square functions; generalized Morrey spaces; commutator; BMO function
\end{abstract}

\section{Introduction and main results}

Let ${\mathbb R}^{n+1}_+=\mathbb R^n\times(0,\infty)$ and $\varphi_t(x)=t^{-n}\varphi(x/t)$. The classical square function (Lusin area integral) is a familiar object. If $u(x,t)=P_t*f(x)$ is the Poisson integral of $f$, where $P_t(x)=c_n\frac{t}{(t^2+|x|^2)^{{(n+1)}/2}}$ denotes the Poisson kernel in ${\mathbb R}^{n+1}_+$. Then we define the classical square function (Lusin area integral) $S(f)$ by (see \cite{stein})
\begin{equation*}
S(f)(x)=\bigg(\iint_{\Gamma(x)}\big|\nabla u(y,t)\big|^2t^{1-n}\,dydt\bigg)^{1/2},
\end{equation*}
where $\Gamma(x)$ denotes the usual cone of aperture one:
\begin{equation*}
\Gamma(x)=\big\{(y,t)\in{\mathbb R}^{n+1}_+:|x-y|<t\big\}
\end{equation*}
and
\begin{equation*}
\big|\nabla u(y,t)\big|=\left|\frac{\partial u}{\partial t}\right|^2+\sum_{j=1}^n\left|\frac{\partial u}{\partial y_j}\right|^2.
\end{equation*}
Similarly, we can define a cone of aperture $\beta$ for any $\beta>0$:
\begin{equation*}
\Gamma_\beta(x)=\big\{(y,t)\in{\mathbb R}^{n+1}_+:|x-y|<\beta t\big\},
\end{equation*}
and corresponding square function
\begin{equation*}
S_\beta(f)(x)=\bigg(\iint_{\Gamma_\beta(x)}\big|\nabla u(y,t)\big|^2t^{1-n}\,dydt\bigg)^{1/2}.
\end{equation*}
The Littlewood-Paley $g$-function (could be viewed as a ``zero-aperture" version of $S(f)$) and the $g^*_\lambda$-function (could be viewed as an ``infinite aperture" version of $S(f)$) are defined respectively by
\begin{equation*}
g(f)(x)=\bigg(\int_0^\infty\big|\nabla u(x,t)\big|^2 t\,dt\bigg)^{1/2}
\end{equation*}
and
\begin{equation*}
g^*_\lambda(f)(x)=\left(\iint_{{\mathbb R}^{n+1}_+}\bigg(\frac t{t+|x-y|}\bigg)^{\lambda n}\big|\nabla u(y,t)\big|^2 t^{1-n}\,dydt\right)^{1/2}, \quad \lambda>1.
\end{equation*}

The modern (real-variable) variant of $S_\beta(f)$ can be defined in the following way (here we drop the subscript $\beta$ if $\beta=1$). Let $\psi\in C^\infty(\mathbb R^n)$ be real, radial, have support contained in $\{x:|x|\le1\}$, and $\int_{\mathbb R^n}\psi(x)\,dx=0$. The continuous square function $S_{\psi,\beta}(f)$ is defined by (see, for example, \cite{chang} and \cite{chanillo})
\begin{equation*}
S_{\psi,\beta}(f)(x)=\bigg(\iint_{\Gamma_\beta(x)}\big|f*\psi_t(y)\big|^2\frac{dydt}{t^{n+1}}\bigg)^{1/2}.
\end{equation*}

In 2007, Wilson \cite{wilson1} introduced a new square function called intrinsic square function which is universal in a sense (see also \cite{wilson2}). This function is independent of any particular kernel $\psi$, and it dominates pointwise all the above-defined square functions. On the other hand, it is not essentially larger than any particular $S_{\psi,\beta}(f)$. For $0<\alpha\le1$, let ${\mathcal C}_\alpha$ be the family of functions $\varphi$ defined on $\mathbb R^n$ such that $\varphi$ has support containing in $\{x\in\mathbb R^n: |x|\le1\}$, $\int_{\mathbb R^n}\varphi(x)\,dx=0$, and for all $x, x'\in \mathbb R^n$,
\begin{equation*}
|\varphi(x)-\varphi(x')|\le|x-x'|^\alpha.
\end{equation*}
For $(y,t)\in {\mathbb R}^{n+1}_+$ and $f\in L^1_{{loc}}(\mathbb R^n)$, we set
\begin{equation*}
A_\alpha(f)(y,t)=\sup_{\varphi\in{\mathcal C}_\alpha}\big|f*\varphi_t(y)\big|=\sup_{\varphi\in{\mathcal C}_\alpha}\bigg|\int_{\mathbb R^n}\varphi_t(y-z)f(z)\,dz\bigg|.
\end{equation*}
Then we define the intrinsic square function of $f$ (of order $\alpha$) by the formula
\begin{equation*}
\mathcal S_\alpha(f)(x)=\left(\iint_{\Gamma(x)}\Big(A_\alpha(f)(y,t)\Big)^2\frac{dydt}{t^{n+1}}\right)^{1/2}.
\end{equation*}
We can also define varying-aperture versions of $\mathcal S_\alpha(f)$ by the formula
\begin{equation*}
\mathcal S_{\alpha,\beta}(f)(x)=\left(\iint_{\Gamma_\beta(x)}\Big(A_\alpha(f)(y,t)\Big)^2\frac{dydt}{t^{n+1}}\right)^{1/2}.
\end{equation*}
The intrinsic Littlewood-Paley $g$-function and the intrinsic $g^*_\lambda$-function will be given respectively by
\begin{equation*}
g_\alpha(f)(x)=\left(\int_0^\infty\Big(A_\alpha(f)(x,t)\Big)^2\frac{dt}{t}\right)^{1/2}
\end{equation*}
and
\begin{equation*}
g^*_{\lambda,\alpha}(f)(x)=\left(\iint_{{\mathbb R}^{n+1}_+}\left(\frac t{t+|x-y|}\right)^{\lambda n}\Big(A_\alpha(f)(y,t)\Big)^2\frac{dydt}{t^{n+1}}\right)^{1/2}, \quad \lambda>1.
\end{equation*}

In \cite{wilson1} and \cite{wilson2}, Wilson has established the following theorems.

\newtheorem*{thma}{Theorem A}

\begin{thma}
Let $0<\alpha\le1$ and $1<p<\infty$. Then there exists a constant $C>0$ independent of $f$ such that
\begin{equation*}
\|\mathcal S_\alpha(f)\|_{L^p}\le C \|f\|_{L^p}.
\end{equation*}
\end{thma}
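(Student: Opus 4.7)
My plan is to prove the $L^2$ bound by a Plancherel/Fubini computation, absorbing the supremum in $A_\alpha$ into an $\ell^\infty$-valued convolution kernel, and then extend to all $1<p<\infty$ via vector-valued Calder\'on--Zygmund theory.

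\textbf{Step 1: $L^2$ boundedness.} Applying Fubini to the cone integral yields
$$\|\mathcal S_\alpha(f)\|_{L^2}^2=v_n\iint_{\mathbb R^{n+1}_+}A_\alpha(f)(y,t)^2\,\frac{dy\,dt}{t},$$
where $v_n$ is the measure of the unit ball in $\mathbb R^n$. For every $\varphi\in\mathcal C_\alpha$, the support condition, mean-zero property, and H\"older-$\alpha$ regularity force $\|\varphi\|_\infty\le1$ and give the uniform Fourier decay $|\widehat\varphi(\xi)|\le C\min(|\xi|,|\xi|^{-\alpha})$, so that $\int_0^\infty|\widehat\varphi(t\xi)|^2\,dt/t\le C$ with $C$ independent of $\varphi$ and $\xi$. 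By Arzel\`a--Ascoli the class $\mathcal C_\alpha$ is precompact in $C(\overline{B_1})$, hence separable, so the supremum defining $A_\alpha(f)(y,t)$ may be replaced by one over a countable dense family $\{\varphi^{(k)}\}\subset\mathcal C_\alpha$. Treating $(\varphi^{(k)}_t)_{k\ge1}$ as an $\ell^\infty$-valued convolution kernel and applying the Plancherel bound termwise, with the uniformity in $k$ guaranteed by the Fourier estimate above, gives $\|\mathcal S_\alpha(f)\|_{L^2}\le C\|f\|_{L^2}$.

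\textbf{Step 2: $L^p$ for $p\ne2$.} The uniform H\"older-$\alpha$ condition on $\mathcal C_\alpha$ translates into standard Calder\'on--Zygmund kernel estimates (size and smoothness in the space variable) for the $\ell^\infty$-valued kernel $(\varphi^{(k)}_t(y))_{k\ge1}$, with constants independent of $k$ and $t$. Vector-valued Calder\'on--Zygmund theory then supplies a weak-type $(1,1)$ bound for the associated sublinear operator, and Marcinkiewicz interpolation with the $L^2$ result from Step 1 provides strong $L^p$ bounds for $1<p<2$. The range $2<p<\infty$ follows by a duality/interpolation argument, using the tent-space identification of $\|\mathcal S_\alpha(f)\|_{L^p}$.

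\textbf{Main obstacle.} The central technical hurdle is the supremum in the definition of $A_\alpha$: it prevents direct use of Plancherel or of the classical Calder\'on--Zygmund theorem for a single convolution kernel. The separability of $\mathcal C_\alpha$, together with uniform Fourier-decay and H\"older estimates that are blind to the choice of $\varphi\in\mathcal C_\alpha$, are precisely what enable the reduction to an $\ell^\infty$-valued operator. Once this reduction is in place, the remainder of the argument mirrors the familiar proof for the classical continuous square function $S_{\psi,\beta}$.
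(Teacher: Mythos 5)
Theorem~A is Wilson's theorem; the paper merely cites it from \cite{wilson1,wilson2} and contains no proof of it, so there is no argument in the paper to compare against. Evaluating your proposal on its own merits, Step~1 has a genuine gap. After Fubini you must bound
\begin{equation*}
\iint_{\mathbb R^{n+1}_+}\sup_{k}\big|f*\varphi^{(k)}_t(y)\big|^2\,\frac{dy\,dt}{t},
\end{equation*}
and you propose to do so by ``applying the Plancherel bound termwise'' to the $\ell^\infty$-valued family. But Plancherel is a Hilbert-space identity with no $\ell^\infty$ analogue, and, more to the point, the uniform single-kernel estimate $\iint|f*\varphi_t(y)|^2\,dy\,dt/t\le C\|f\|_{L^2}^2$ that your Fourier-decay bound correctly delivers controls only $\sup_k\iint|f*\varphi^{(k)}_t(y)|^2\,dy\,dt/t$, whereas the intrinsic square function requires $\iint\sup_k|\cdot|^2$. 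The supremum sits \emph{inside} the integral, and the inequality $\iint\sup_k\le\sup_k\iint$ is false in general (take $g_k$ with disjoint supports and equal $L^2$ mass). Since Step~2 (vector-valued Calder\'on--Zygmund theory, interpolation, duality) is predicated on the $L^2$ bound from Step~1, the whole argument collapses. Taming that interior supremum is precisely the content of Wilson's theorem; in \cite{wilson1} it is handled not by Plancherel but by a pointwise comparison of $A_\alpha(f)$ with discretized (dyadic/stopping-time) square functions and a good-$\lambda$ argument. To salvage your outline you would need a preliminary decomposition lemma dominating $A_\alpha(f)(y,t)$ pointwise by an absolutely convergent sum $\sum_j c_j\,|f*\psi^{(j)}_t(y)|$ over a fixed countable family $\{\psi^{(j)}\}$ with $\sum_j|c_j|<\infty$, thereby converting the supremum into a sum to which termwise Plancherel and vector-valued $\ell^2$ Calder\'on--Zygmund theory genuinely apply; establishing such a decomposition is the real work, and it is absent from your sketch.
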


\newtheorem*{thmb}{Theorem B}

\begin{thmb}
Let $0<\alpha\le1$ and $p=1$. Then for any $\lambda>0$, there exists a constant $C>0$ independent of $f$ and $\lambda$ such that
\begin{equation*}
\big|\big\{x\in\mathbb R^n:\mathcal S_\alpha(f)(x)>\lambda\big\}\big|\le \frac{C}{\lambda}\int_{\mathbb R^n}|f(x)|\,dx.
\end{equation*}
\end{thmb}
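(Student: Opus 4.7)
The plan is to prove this weak-type $(1,1)$ endpoint via the Calderón--Zygmund decomposition, in parallel with the classical argument for Lusin area integrals. First, applying the decomposition to $f$ at height $\lambda$ yields a collection of pairwise disjoint dyadic cubes $\{Q_j\}$ with $\sum_j|Q_j|\le C\|f\|_{L^1}/\lambda$, together with a splitting $f=g+b$, $b=\sum_j b_j$, such that $\|g\|_{L^\infty}\le C\lambda$, $\|g\|_{L^1}\le\|f\|_{L^1}$, each $b_j$ is supported in $Q_j$, $\int b_j=0$, and $\|b_j\|_{L^1}\le C\lambda|Q_j|$. Noting that $\mathcal S_\alpha$ is sublinear (because $A_\alpha$ is defined via a supremum), one may then split
\begin{equation*}
\bigl|\{x:\mathcal S_\alpha(f)(x)>\lambda\}\bigr|\le\bigl|\{x:\mathcal S_\alpha(g)(x)>\lambda/2\}\bigr|+\bigl|\{x:\mathcal S_\alpha(b)(x)>\lambda/2\}\bigr|.
\end{equation*}

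For the good part, the $L^2$ boundedness supplied by Theorem A (applied with $p=2$) combined with Chebyshev and the standard estimate $\|g\|_{L^2}^2\le C\lambda\|g\|_{L^1}\le C\lambda\|f\|_{L^1}$ immediately gives a term of size $C\|f\|_{L^1}/\lambda$.

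The substantive part is handling $b$. Let $Q_j^{*}$ denote a fixed dilate of $Q_j$ (e.g.\ $Q_j^*=2\sqrt{n}\,Q_j$) and $\Omega=\bigcup_j Q_j^{*}$, so that $|\Omega|\le C\|f\|_{L^1}/\lambda$. It suffices to show
\begin{equation*}
\int_{\mathbb R^n\setminus\Omega}\mathcal S_\alpha(b)(x)\,dx\le C\|f\|_{L^1},
\end{equation*}
after which another application of Chebyshev closes the estimate. The key pointwise bound exploits simultaneously the cancellation $\int b_j=0$ and the Lipschitz-$\alpha$ regularity of $\varphi\in\mathcal C_\alpha$. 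Letting $c_j$ denote the center of $Q_j$, for any $\varphi\in\mathcal C_\alpha$ one writes
\begin{equation*}
b_j*\varphi_t(y)=\int_{Q_j}\bigl[\varphi_t(y-z)-\varphi_t(y-c_j)\bigr]b_j(z)\,dz,
\end{equation*}
and the Lipschitz-$\alpha$ condition together with the scaling $\varphi_t(\cdot)=t^{-n}\varphi(\cdot/t)$ yields $|\varphi_t(y-z)-\varphi_t(y-c_j)|\le C\,\ell(Q_j)^\alpha\,t^{-n-\alpha}$. Taking the supremum over $\varphi\in\mathcal C_\alpha$ gives $A_\alpha(b_j)(y,t)\le C\,\ell(Q_j)^\alpha t^{-n-\alpha}\|b_j\|_{L^1}$, uniformly in $\varphi$. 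Inserting this into the definition of $\mathcal S_\alpha$ and noting that for $x\notin Q_j^{*}$ the region $(y,t)\in\Gamma(x)$ contributing to the integral must satisfy $t\gtrsim\mathrm{dist}(x,Q_j)$ (since one needs $|y-z|\le t$ for some $z\in Q_j$ and $|x-y|<t$), the $y$-integration produces a factor $t^n$ and the $t$-integration converges at infinity, leading to
\begin{equation*}
\mathcal S_\alpha(b_j)(x)\le C\,\ell(Q_j)^\alpha\|b_j\|_{L^1}\,|x-c_j|^{-n-\alpha},\qquad x\notin Q_j^{*}.
\end{equation*}
Integrating this over $\mathbb R^n\setminus Q_j^{*}$ produces the bound $C\|b_j\|_{L^1}$, and summing in $j$ yields the required $C\|f\|_{L^1}$.

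The main technical obstacle is the pointwise estimate on $A_\alpha(b_j)(y,t)$: one must carry out the cancellation argument while respecting that the supremum inside $A_\alpha$ ranges over an entire family of kernels $\varphi$ with possibly different supports. The argument above works precisely because the Lipschitz-$\alpha$ bound and the support condition $\{|x|\le1\}$ are \emph{uniform} in $\varphi\in\mathcal C_\alpha$, so that the resulting estimate on $A_\alpha(b_j)$ depends only on the intrinsic data of the cube $Q_j$ and not on any particular $\varphi$. Everything else is bookkeeping: verifying the geometric constraints on $(y,t)\in\Gamma(x)$ that force $t$ to be bounded below, and performing the elementary $L^1$-integration of $|x-c_j|^{-n-\alpha}$ over the complement of a dilate of $Q_j$.
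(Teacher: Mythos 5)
Your Calder\'on--Zygmund decomposition argument is correct, and it is effectively the same approach the paper uses. The paper itself does not reprove Theorem B---it is quoted from Wilson \cite{wilson1,wilson2}---but your method (good/bad splitting, the $L^2$ bound from Theorem A plus Chebyshev for the good part, and the cancellation $\int b_j=0$ played against the uniform Lipschitz-$\alpha$ bound on $\varphi\in\mathcal C_\alpha$ together with the geometric constraint $t\gtrsim|x-c_j|$ for the bad part) is precisely the technique the paper deploys for the analogous weak-$(1,1)$ estimate for $g^*_{\lambda,\alpha}$ in Theorem 5.1; your pointwise bound $\mathcal S_\alpha(b_j)(x)\lesssim \ell(Q_j)^\alpha\|b_j\|_{L^1}|x-c_j|^{-n-\alpha}$ is exactly the $j=0$ instance of the estimate derived there for $\mathcal S_{\alpha,2^j}(b_i)$.
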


For further discussions about the boundedness of intrinsic square functions on some other function spaces, we refer the reader to \cite{huang,wang4,wang1,wang2,wang3}.

Let $b$ be a locally integrable function on $\mathbb R^n$. In \cite{wang1}, we first introduced the commutators generated by $b$ and intrinsic square functions, which are defined respectively by the following expressions.
\begin{equation*}
\big[b,\mathcal S_\alpha\big](f)(x)=\left(\iint_{\Gamma(x)}\sup_{\varphi\in{\mathcal C}_\alpha}\bigg|\int_{\mathbb R^n}\big[b(x)-b(z)\big]\varphi_t(y-z)f(z)\,dz\bigg|^2\frac{dydt}{t^{n+1}}\right)^{1/2},
\end{equation*}
\begin{equation*}
\big[b,g_\alpha\big](f)(x)=\left(\int_0^\infty\sup_{\varphi\in{\mathcal C}_\alpha}\bigg|\int_{\mathbb R^n}\big[b(x)-b(y)\big]\varphi_t(x-y)f(y)\,dy\bigg|^2\frac{dt}{t}\right)^{1/2},
\end{equation*}
and
\begin{equation*}
\big[b,g^*_{\lambda,\alpha}\big](f)(x)=\left(\iint_{{\mathbb R}^{n+1}_+}\left(\frac t{t+|x-y|}\right)^{\lambda n}\sup_{\varphi\in{\mathcal C}_\alpha}\bigg|\int_{\mathbb R^n}\big[b(x)-b(z)\big]\varphi_t(y-z)f(z)\,dz\bigg|^2\frac{dydt}{t^{n+1}}\right)^{1/2}.
\end{equation*}

On the other hand, the classical Morrey spaces $\mathcal L^{p,\lambda}$ were first introduced by Morrey in \cite{morrey} to study the
local behavior of solutions to second order elliptic partial differential equations. Since then, these spaces play an important role in studying the regularity of solutions to partial differential equations. For the boundedness of the
Hardy-Littlewood maximal operator, the fractional integral operator and the Calder\'on-Zygmund singular integral
operator on these spaces, we refer the reader to \cite{adams,chiarenza,peetre}. For the properties and applications
of classical Morrey spaces, see \cite{fan,fazio1,fazio2} and the references therein.

Let $\Phi=\Phi(r)$, $r>0$, be a growth function, that is, a positive increasing function in $(0,\infty)$ and satisfy the following doubling condition.
\begin{equation}
\Phi(2r)\le D\cdot\Phi(r), \quad \mbox{for all }\,r>0,
\end{equation}
where $D=D(\Phi)\ge1$ is a doubling constant independent of $r$. In \cite{mizuhara}, Mizuhara introduced the following generalized Morrey spaces $L^{p,\Phi}$ and then discussed the boundedness of Hardy-Littlewood maximal operator, the Calder\'on-Zygmund singular integral operator and associated maximal operator on these spaces. For the continuity properties of a class of sublinear operators with rough kernels on $L^{p,\Phi}$, one can see \cite{lu}.

\newtheorem*{defc}{Definition C}

\begin{defc}[\cite{mizuhara}]
Let $1\le p<\infty$. We denote by $L^{p,\Phi}=L^{p,\Phi}(\mathbb R^n)$ the space of all locally integrable functions $f$ defined on $\mathbb R^n$, such that for every $x_0\in\mathbb R^n$ and all $r>0$
\begin{equation}
\int_{B(x_0,r)}|f(x)|^p\,dx\le C^p\Phi(r),
\end{equation}
where $B(x_0,r)=\{x\in\mathbb R^n:|x-x_0|<r\}$ is the ball centered at $x_0$ and with radius $r>0$. Then we let $\|f\|_{L^{p,\Phi}}$ be the smallest constant $C>0$ satisfying (1.2) and $L^{p,\Phi}(\mathbb R^n)$ becomes a Banach space with norm $\|\cdot\|_{L^{p,\Phi}}$.
\end{defc}
Obviously, when $\Phi(r)=r^{\lambda}$ with $0<\lambda<n$, $L^{p,\Phi}$ is just the classical Morrey spaces introduced in \cite{morrey}. We also denote by $WL^{p,\Phi}=WL^{p,\Phi}(\mathbb R^n)$ the generalized weak Morrey spaces of all measurable functions $f$ for which
\begin{equation}
\sup_{\lambda>0}\lambda\cdot\big|\big\{x\in B(x_0,r):|f(x)|>\lambda\big\}\big|\le C\Phi(r),
\end{equation}
for every $x_0\in\mathbb R^n$ and all $r>0$. The smallest constant $C>0$ satisfying (1.3) is also denoted by $\|f\|_{WL^{p,\Phi}}$.

The main purpose of this paper is to discuss the boundedness properties of intrinsic square functions and their commutators with BMO functions on the generalized Morrey spaces $L^{p,\Phi}(\mathbb R^n)$ for all $1\le p<\infty$. Our main results in the paper are formulated as follows.

\newtheorem{theorem}{Theorem}[section]

\begin{theorem}
Let $0<\alpha\le1$ and $1<p<\infty$. Assume that $\Phi$ satisfies $(1.1)$ and $1\le D(\Phi)<2^n$, then there is a
constant $C>0$ independent of $f$ such that
\begin{equation*}
\big\|\mathcal S_\alpha(f)\big\|_{L^{p,\Phi}}\le C\|f\|_{L^{p,\Phi}}.
\end{equation*}
\end{theorem}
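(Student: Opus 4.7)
The plan is to follow the standard ball-decomposition approach for Morrey estimates. Fix a ball $B=B(x_0,r)$ and split $f=f_1+f_2$ with $f_1=f\chi_{2B}$ and $f_2=f\chi_{(2B)^c}$. By sublinearity of $\mathcal S_\alpha$, it suffices to control $\|\mathcal S_\alpha(f_i)\|_{L^p(B)}$ by $C\|f\|_{L^{p,\Phi}}\Phi(r)^{1/p}$ for $i=1,2$, and then to take the supremum over balls.

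For the local piece $f_1$, the strong $(p,p)$ bound of Theorem A applies directly:
\begin{equation*}
\|\mathcal S_\alpha(f_1)\|_{L^p(B)}\le \|\mathcal S_\alpha(f_1)\|_{L^p(\mathbb R^n)}\le C\|f\|_{L^p(2B)}\le C\|f\|_{L^{p,\Phi}}\,\Phi(2r)^{1/p},
\end{equation*}
and one absorbs $\Phi(2r)$ into $\Phi(r)$ using (1.1).

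For $f_2$ I would derive a pointwise bound on $\mathcal S_\alpha(f_2)(x)$ for $x\in B$. Since every $\varphi\in\mathcal C_\alpha$ is supported in the unit ball and uniformly bounded (from the mean-zero and Hölder hypotheses), we have $A_\alpha(f_2)(y,t)\le Ct^{-n}\int_{|y-z|\le t,\, z\notin 2B}|f(z)|\,dz$. For $x\in B$ and $(y,t)\in\Gamma(x)$, the triangle inequality forces $t>r/2$ whenever the inner integral is nonzero, and $|z-x|\le 2t$. Integrating in $y$ over $|y-x|<t$ produces a factor $t^n$, leaving
\begin{equation*}
\mathcal S_\alpha(f_2)(x)^2\le C\int_{r/2}^{\infty}\Big(\int_{|z-x|\le 2t}|f(z)|\,dz\Big)^{2}\frac{dt}{t^{2n+1}}.
\end{equation*}
I would then decompose dyadically in $t$ (setting $t\sim 2^k r$, $k\ge 0$), apply Hölder's inequality in $z$ on each ball $B(x_0,2^{k+2}r)$, and insert the Morrey norm $\|f\|_{L^{p,\Phi}}$ together with iterated applications of the doubling condition $\Phi(2^{k+2}r)\le D^{k+2}\Phi(r)$. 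The typical dyadic contribution is of order $r^{-2n/p}\|f\|_{L^{p,\Phi}}^{2}\Phi(r)^{2/p}(D/2^{n})^{2k/p}$, so summing in $k$ yields the pointwise estimate $\mathcal S_\alpha(f_2)(x)\le Cr^{-n/p}\|f\|_{L^{p,\Phi}}\Phi(r)^{1/p}$, from which $\|\mathcal S_\alpha(f_2)\|_{L^{p}(B)}\le C\|f\|_{L^{p,\Phi}}\Phi(r)^{1/p}$ follows by integrating over $B$.

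The main obstacle, and the precise place where the hypothesis $D<2^n$ is indispensable, is the convergence of the geometric series $\sum_{k\ge 0}(D/2^n)^{2k/p}$ arising from the dyadic sum on $f_2$. Everything else is routine: the $f_1$ piece uses only Theorem A and a single doubling step, the pointwise bound on $f_2$ is a consequence of the support/size properties built into $\mathcal C_\alpha$, and the Hölder step on annuli is standard. If $D\ge 2^n$ one cannot sum, reflecting the known fact that the conclusion itself can fail for growth functions that are too large.
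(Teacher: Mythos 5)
Your proposal is correct and follows essentially the same route as the paper: split $f=f\chi_{2B}+f\chi_{(2B)^c}$, control the local piece by Theorem A plus one doubling step, and obtain a pointwise bound on $\mathcal S_\alpha(f_2)$ over $B$ via the support/size properties of $\mathcal C_\alpha$, H\"older on dilated balls, and the geometric series $\sum_k (D/2^n)^{k/p}$ which converges precisely because $D<2^n$. The only cosmetic difference is that you decompose dyadically in the aperture scale $t$ after integrating out $y$, whereas the paper decomposes $f_2$ into annuli $2^{k+1}B\setminus 2^kB$ and pulls the sum out of the square function with Minkowski's integral inequality; the two bookkeepings are equivalent and lead to the same estimate.
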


\begin{theorem}
Let $0<\alpha\le1$ and $p=1$. Assume that $\Phi$ satisfies $(1.1)$ and $1\le D(\Phi)<2^n$, then there is a
constant $C>0$ independent of $f$ such that
\begin{equation*}
\big\|\mathcal S_\alpha(f)\big\|_{WL^{1,\Phi}}\le C\|f\|_{L^{1,\Phi}}.
\end{equation*}
\end{theorem}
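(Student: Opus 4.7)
The plan is to mimic the standard Morrey-space argument: fix a ball $B=B(x_0,r)$, split $f=f_1+f_2$ with $f_1=f\chi_{2B}$ and $f_2=f\chi_{(2B)^c}$, and bound $\mathcal S_\alpha(f_1)$ by the known weak $(1,1)$ estimate (Theorem B), while bounding $\mathcal S_\alpha(f_2)$ pointwise on $B$ by a quantity that depends on the annular averages of $|f|$.

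For the local part, Theorem B immediately gives
\[
\bigl|\{x\in B:\mathcal S_\alpha(f_1)(x)>\lambda/2\}\bigr|
\le \frac{C}{\lambda}\int_{2B}|f(z)|\,dz
\le \frac{C}{\lambda}\|f\|_{L^{1,\Phi}}\Phi(2r)
\le \frac{CD}{\lambda}\|f\|_{L^{1,\Phi}}\Phi(r),
\]
using the defining inequality of $L^{1,\Phi}$ followed by the doubling condition (1.1).

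For the global part, the main step — and the one I expect to be the least mechanical — is the pointwise bound, valid for $x\in B$,
\[
\mathcal S_\alpha(f_2)(x)\le C\int_{(2B)^c}\frac{|f(z)|}{|z-x_0|^n}\,dz.
\]
I would obtain this by noting that, because every $\varphi\in\mathcal C_\alpha$ is supported in $|z|\le 1$ and uniformly bounded (indeed $|\varphi|\le 1$, since $\varphi$ vanishes on the unit sphere), one has $A_\alpha(f_2)(y,t)\le t^{-n}\int_{|y-z|\le t}|f_2(z)|\,dz$. Plugging this into the definition of $\mathcal S_\alpha$ and applying Minkowski's integral inequality yields
\[
\mathcal S_\alpha(f_2)(x)\le\int_{(2B)^c}|f(z)|\left(\iint_{\Gamma(x),\,|y-z|\le t}\frac{dy\,dt}{t^{3n+1}}\right)^{1/2}dz.
\]
The inner double integral is elementary: the $y$-slice has measure $\lesssim t^n$, and both cone conditions force $t\ge|x-z|/2$, so the bracket evaluates to $C|x-z|^{-n}$. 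Combined with $|x-z|\ge|z-x_0|/2$ for $x\in B,\ z\in(2B)^c$, this delivers the displayed pointwise estimate.

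Decomposing $(2B)^c=\bigcup_{k\ge 1}(2^{k+1}B\setminus 2^kB)$ and using (1.2) and the doubling condition iterated $k+1$ times gives
\[
\mathcal S_\alpha(f_2)(x)\le C\sum_{k=1}^\infty\frac{1}{(2^k r)^n}\int_{2^{k+1}B}|f(z)|\,dz
\le \frac{C\|f\|_{L^{1,\Phi}}\Phi(r)}{r^n}\sum_{k=1}^\infty\Bigl(\frac{D}{2^n}\Bigr)^k,
\]
and this is precisely where the hypothesis $D<2^n$ is used: the geometric series converges. The resulting bound $\mathcal S_\alpha(f_2)(x)\le C\|f\|_{L^{1,\Phi}}\Phi(r)/r^n$ on $B$, followed by Chebyshev's inequality integrated over $B$, gives $|\{x\in B:\mathcal S_\alpha(f_2)(x)>\lambda/2\}|\le C\lambda^{-1}\|f\|_{L^{1,\Phi}}\Phi(r)$. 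Adding the two pieces and taking the supremum over $\lambda>0$ and over balls $B(x_0,r)$ yields the desired inequality $\|\mathcal S_\alpha(f)\|_{WL^{1,\Phi}}\le C\|f\|_{L^{1,\Phi}}$. The chief technical hurdle is really just the Minkowski/Fubini reduction producing the $|z-x_0|^{-n}$ kernel; everything else is a direct summation controlled by the hypothesis $D(\Phi)<2^n$.
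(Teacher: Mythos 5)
Your proof is correct and follows essentially the same route as the paper's: split $f=f_1+f_2$ at $2B$, handle $f_1$ via Theorem B plus doubling, and for $f_2$ use the compact support of $\varphi$ (forcing $t\gtrsim|x-z|$) together with Minkowski's integral inequality and the hypothesis $D(\Phi)<2^n$ to obtain the uniform bound $\mathcal S_\alpha(f_2)(x)\le C\|f\|_{L^{1,\Phi}}\Phi(r)/r^n$ on $B$, then close with the trivial/Chebyshev estimate for the level set. Your pointwise reduction to the kernel $|z-x_0|^{-n}$ before dyadic decomposition is a tidier way to organize the same computation the paper carries out after decomposing into annuli.
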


\begin{theorem}
Let $0<\alpha\le1$, $1<p<\infty$ and $b\in BMO(\mathbb R^n)$. Assume that $\Phi$ satisfies $(1.1)$ and $1\le D(\Phi)<2^n$, then there is a
constant $C>0$ independent of $f$ such that
\begin{equation*}
\big\|\big[b,\mathcal S_\alpha\big](f)\big\|_{L^{p,\Phi}}\le C\|f\|_{L^{p,\Phi}}.
\end{equation*}
\end{theorem}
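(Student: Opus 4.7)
The plan is to follow the standard scheme for commutators on Morrey-type spaces. Fix a ball $B = B(x_0, r) \subset \mathbb{R}^n$ and decompose $f = f_1 + f_2$ with $f_1 = f\chi_{2B}$; it will suffice to prove
\begin{equation*}
\big\|[b,\mathcal{S}_\alpha](f)\big\|_{L^p(B)} \le C\,\|b\|_{BMO}\,\Phi(r)^{1/p}\,\|f\|_{L^{p,\Phi}}
\end{equation*}
with $C$ independent of $B$. The supremum defining $A_\alpha$ is sublinear, so $[b,\mathcal{S}_\alpha](f) \le [b,\mathcal{S}_\alpha](f_1) + [b,\mathcal{S}_\alpha](f_2)$ pointwise, and inserting the BMO splitting $b(x) - b(z) = (b(x) - b_{2B}) + (b_{2B} - b(z))$ into the definition of the commutator produces
\begin{equation*}
[b,\mathcal{S}_\alpha](f_2)(x) \le |b(x) - b_{2B}|\,\mathcal{S}_\alpha(f_2)(x) + \mathcal{S}_\alpha\bigl((b_{2B} - b)f_2\bigr)(x).
\end{equation*}

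For the local piece I invoke the $L^p(\mathbb{R}^n)$-boundedness of $[b,\mathcal{S}_\alpha]$, a known consequence of Theorem~A established in \cite{wang1}, which gives directly
\begin{equation*}
\big\|[b,\mathcal{S}_\alpha](f_1)\big\|_{L^p(B)} \le C\,\|b\|_{BMO}\,\|f\|_{L^p(2B)} \le C\,\|b\|_{BMO}\,\Phi(2r)^{1/p}\,\|f\|_{L^{p,\Phi}},
\end{equation*}
after which doubling absorbs $\Phi(2r)$ into $\Phi(r)$. Both global pieces are handled by a uniform pointwise bound on $B$ produced by dyadic annular decomposition of $(2B)^c = \bigcup_{k\ge 1}(2^{k+1}B \setminus 2^k B)$. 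Since $\varphi \in \mathcal{C}_\alpha$ is supported in the unit ball with $|\varphi|$ bounded, one has $|\varphi_t(y-z)| \le C\,t^{-n}\chi_{|y-z|\le t}$; combined with the cone condition $|y-x|<t$ and $|x - x_0| < r$, an elementary geometric check forces $t \ge r/2$ whenever a source $z \in (2B)^c$ contributes and confines the $z$-integration to $B(x_0, 5t)$. Hölder's inequality and the Morrey condition then give
\begin{equation*}
\mathcal{S}_\alpha(f_2)(x)^2 \le C\,\|f\|_{L^{p,\Phi}}^2 \int_{r/2}^\infty t^{-2n/p}\,\Phi(t)^{2/p}\,\frac{dt}{t}.
\end{equation*}
A dyadic decomposition $t \sim 2^k r$ together with $\Phi(2^k r) \le D^k\Phi(r)$ turns this into a geometric series of ratio $(D/2^n)^{2/p} < 1$, yielding $\mathcal{S}_\alpha(f_2)(x) \le C\,r^{-n/p}\Phi(r)^{1/p}\|f\|_{L^{p,\Phi}}$ uniformly on $B$; multiplying by $|b(x) - b_{2B}|$ and applying John--Nirenberg in the form $\|b - b_{2B}\|_{L^p(B)} \le C\,\|b\|_{BMO}\,|B|^{1/p}$ settles the first global term.

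The main obstacle is the second global term $\mathcal{S}_\alpha\bigl((b_{2B}-b)f_2\bigr)$, in which the BMO function inside the integrand creates an extra logarithmic correction. Repeating the annular argument with integrand $|b_{2B} - b(z)||f(z)|$, Hölder's inequality produces the additional factor
\begin{equation*}
\bigg(\int_{B(x_0,5t)}|b_{2B} - b(z)|^{p'}\,dz\bigg)^{1/p'} \le C\,\|b\|_{BMO}\,|B(x_0,5t)|^{1/p'}\bigl(1 + \log(t/r)\bigr),
\end{equation*}
by the standard oscillation estimate for averages $b_Q - b_{Q'}$ on nested balls of comparable centers. This inserts a $\log(t/r)$ into the integrand, which after the dyadic decomposition becomes a polynomial factor $(1+k)$ in the $k$-th summand. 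The strict inequality $D < 2^n$ provides a geometric cushion so that the series $\sum_{k\ge 1}(1+k)^2(D/2^n)^{2k/p}$ still converges; this is precisely where the quantitative doubling hypothesis is used decisively. Collecting the four estimates yields the required bound, uniformly in $B$, completing the proof.
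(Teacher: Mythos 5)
Your proposal is correct and takes essentially the same route as the paper: decompose $f = f_1 + f_2$, dispose of the local piece by the $L^p$-boundedness of the commutator, split $b(x)-b(z)$ through the ball average, bound $\mathcal S_\alpha(f_2)$ pointwise via the annular/cone geometry, and use John--Nirenberg together with the strict doubling gap $D<2^n$ to absorb the logarithmic growth. The only cosmetic difference is that you keep the $t$-integral continuous and package the BMO oscillation over nested balls into the single estimate $\|b_{2B}-b\|_{L^{p'}(B(x_0,5t))}\le C\|b\|_{*}|B(x_0,5t)|^{1/p'}(1+\log(t/r))$, whereas the paper decomposes $(2B)^c$ dyadically first and then splits $b(z)-b_B$ into $(b(z)-b_{2^{k+1}B})+(b_{2^{k+1}B}-b_B)$, yielding the explicit factor $(k+1)$; both produce the same convergent series $\sum_k(1+k)(D/2^n)^{k/p}$.
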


\begin{theorem}
Let $0<\alpha\le1$ and $1<p<\infty$. Suppose that $\lambda>3$, $\Phi$ satisfies $(1.1)$ and $1\le D(\Phi)<2^n$, then there is a
constant $C>0$ independent of $f$ such that
\begin{equation*}
\big\|g^*_{\lambda,\alpha}(f)\big\|_{L^{p,\Phi}}\le C\|f\|_{L^{p,\Phi}}.
\end{equation*}
\end{theorem}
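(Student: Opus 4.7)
The plan is to reduce Theorem 1.4 to Theorem 1.1 via the classical pointwise domination of the intrinsic $g^*_{\lambda,\alpha}$-function by a geometrically decaying series of varying-aperture intrinsic area integrals $\mathcal S_{\alpha,2^k}$.

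First, I would split the half-space $\mathbb R^{n+1}_+$ appearing in the definition of $g^*_{\lambda,\alpha}(f)(x)$ into the central cone $\Gamma(x)$ and the annular regions $E_k(x)=\{(y,t)\in\mathbb R^{n+1}_+:2^{k-1}t\le|x-y|<2^k t\}$ for $k\ge1$. On $E_k(x)$ the weight $(t/(t+|x-y|))^{\lambda n}$ is comparable to $2^{-k\lambda n}$, while $E_k(x)\subset\Gamma_{2^k}(x)$. Extracting the weight from each piece and taking square roots yields the pointwise estimate
\[
g^*_{\lambda,\alpha}(f)(x)\le C\sum_{k=0}^{\infty}2^{-k\lambda n/2}\,\mathcal S_{\alpha,2^k}(f)(x).
\]

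Next, I would prove a companion to Theorem 1.1 that tracks the aperture explicitly: for every $\beta\ge1$,
\[
\|\mathcal S_{\alpha,\beta}(f)\|_{L^{p,\Phi}}\le C\beta^{3n/2}\|f\|_{L^{p,\Phi}}.
\]
The corresponding Lebesgue-space estimate $\|\mathcal S_{\alpha,\beta}(f)\|_{L^p}\le C\beta^{3n/2}\|f\|_{L^p}$ is a routine extension of Theorem A: a Fubini argument produces the exponent $n/2$ in the $L^2$ range, and the full $3n/2$ follows after interpolation against a Fefferman-Stein type inequality comparing $\mathcal S_{\alpha,\beta}$ with a suitable maximal function. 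To upgrade this to $L^{p,\Phi}$, I would fix a ball $B=B(x_0,r)$, decompose $f=f\chi_{2\beta B}+f\chi_{(2\beta B)^c}$, apply the $L^p$ bound to the first piece together with the trivial inequality $\|f\chi_{2\beta B}\|_{L^p}\le \|f\|_{L^{p,\Phi}}\Phi(2\beta r)^{1/p}$, and estimate the tail by a direct kernel calculation along dyadic annuli, exactly as in the proof of Theorem 1.1. The doubling hypothesis $D(\Phi)<2^n$ is used in both pieces to absorb the growth of $\Phi(2^j\beta r)^{1/p}$ by $\Phi(r)^{1/p}$ times a convergent geometric factor.

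Finally, I would take $L^{p,\Phi}$-norms on both sides of the pointwise estimate, apply Minkowski, and substitute the aperture bound to obtain
\[
\|g^*_{\lambda,\alpha}(f)\|_{L^{p,\Phi}}\le C\|f\|_{L^{p,\Phi}}\sum_{k=0}^{\infty}2^{-k\lambda n/2}\cdot2^{3kn/2}=C\|f\|_{L^{p,\Phi}}\sum_{k=0}^{\infty}2^{kn(3-\lambda)/2},
\]
which is finite precisely because $\lambda>3$. The main obstacle is the sharp tracking of the $\beta$-dependence in the $L^{p,\Phi}$-bound for $\mathcal S_{\alpha,\beta}$: the near part of the decomposition inherits the $\beta^{3n/2}$ factor from the $L^p$ bound, while the far part must be controlled via the doubling assumption $D(\Phi)<2^n$ without introducing any further amplification in $\beta$. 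Keeping the total exponent on $\beta$ no larger than $3n/2$ is exactly what makes the hypothesis $\lambda>3$ give a convergent series.
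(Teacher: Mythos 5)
Your overall strategy---pointwise domination of $g^*_{\lambda,\alpha}$ by a geometric series of aperture-$2^j$ square functions, followed by an aperture-tracked $L^{p,\Phi}$ bound for $\mathcal S_{\alpha,\beta}$---is exactly the paper's. But there are two concrete flaws in the way you propose to track the aperture, and they combine to produce a genuine gap.

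First, the $L^p$ aperture bound you assert, $\|\mathcal S_{\alpha,\beta}(f)\|_{L^p}\le C\beta^{3n/2}\|f\|_{L^p}$, is weaker than the truth and does not follow from the interpolation you describe. The paper proves (Lemmas 4.1--4.3, via Fubini for $p=2$, duality for $p>2$, and a good-$\lambda$/Chebyshev argument for $1\le p<2$) the sharper bounds $\beta^{n/2}$ for $p\ge 2$ and $\beta^{n/p}$ for $1\le p<2$, both of which are $\le\beta^n$. An interpolation against a maximal-function comparison cannot produce an exponent larger than $n$, so the claimed ``full $3n/2$'' has no source. The $\beta^{3n/2}$ is only correct as the exponent coming from the \emph{far}, kernel-estimated piece in the $L^{p,\Phi}$ argument, not as an $L^p$ operator-norm bound.

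Second, and more seriously, you decompose $f$ at the ball $2\beta B$ rather than at $2B$. This means the near piece carries an extra doubling factor $\Phi(2\beta r)^{1/p}/\Phi(r)^{1/p}\le D(\Phi)^{(j+1)/p}$ with $\beta=2^j$, which is of the same order as $\beta^{\log_2 D(\Phi)/p}$ and is not absorbed by the hypothesis $D(\Phi)<2^n$---it can be as large as $\beta^{n/p}$ up to a constant. Consequently, even with the \emph{sharp} $L^p$ bound $\beta^{n/p}$, the near part of your $L^{p,\Phi}$ estimate is of size $\beta^{2n/p}$, which exceeds $\beta^{3n/2}$ whenever $1<p<4/3$ (and with your weaker $L^p$ bound $\beta^{3n/2}$, the near part is $\beta^{3n/2+n/p}>\beta^{3n/2}$ for every $p>1$). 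Either way, summing $\sum_j 2^{-j\lambda n/2}\beta^{\text{exponent}}$ would then require $\lambda$ strictly larger than $3$, so the stated hypothesis is not enough. The paper avoids this entirely by decomposing at $2B$ independently of the aperture: the near piece then has only $\Phi(2r)^{1/p}/\Phi(r)^{1/p}\le D^{1/p}$, a constant, so the near part inherits only the $L^p$ exponent $n/p\vee n/2\le n<3n/2$, and the far piece, with $t\gtrsim 2^{k-2-j}r$ on the relevant region, produces exactly $\beta^{3n/2}$, making $\lambda>3$ both necessary and sufficient for convergence.

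To repair your argument: prove the $L^p$ aperture bound with the correct exponents (Lemmas 4.1--4.3 style), decompose at $2B$, and redo the far-piece geometry with the lower bound $t\ge 2^{k-2}r/\beta$ (not $t\ge 2^{k-2}r$). With those changes your scheme matches the paper's.
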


\begin{theorem}
Let $0<\alpha\le1$ and $p=1$. Suppose that $\lambda>{(3n+2\alpha)}/n$, $\Phi$ satisfies $(1.1)$ and $1\le D(\Phi)<2^n$, then there is a
constant $C>0$ independent of $f$ such that
\begin{equation*}
\big\|g^*_{\lambda,\alpha}(f)\big\|_{WL^{1,\Phi}}\le C\|f\|_{L^{1,\Phi}}.
\end{equation*}
\end{theorem}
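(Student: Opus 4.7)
The strategy is the standard interior-plus-tail decomposition specialized to the weak Morrey setting. Fix $x_0\in\mathbb R^n$ and $r>0$, put $B=B(x_0,r)$, and split $f=f_1+f_2$ with $f_1=f\chi_{2B}$ and $f_2=f\chi_{(2B)^c}$. By sublinearity it suffices to show separately that for every $t>0$,
\[\bigl|\{x\in B:g^*_{\lambda,\alpha}(f_i)(x)>t/2\}\bigr|\le\frac{C\Phi(r)}{t}\|f\|_{L^{1,\Phi}},\qquad i=1,2.\]

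For $i=1$, the plan is to invoke the Euclidean weak-$(1,1)$ bound for $g^*_{\lambda,\alpha}$, which is known to hold precisely under the hypothesis $\lambda>(3n+2\alpha)/n$ (see Wilson \cite{wilson1,wilson2}). Together with the definition of $L^{1,\Phi}$ and the doubling condition $(1.1)$, this gives
\[\bigl|\{x\in B:g^*_{\lambda,\alpha}(f_1)(x)>t/2\}\bigr|\le\frac{C}{t}\int_{2B}|f(z)|\,dz\le\frac{C\Phi(2r)}{t}\|f\|_{L^{1,\Phi}}\le\frac{CD}{t}\Phi(r)\|f\|_{L^{1,\Phi}},\]
which is already of the required form.

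The main work is in the global piece $i=2$, where I would establish the pointwise estimate
\[g^*_{\lambda,\alpha}(f_2)(x)\le C\sum_{k=1}^{\infty}2^{-k\sigma}\cdot\frac{1}{|2^{k+1}B|}\int_{2^{k+1}B}|f(z)|\,dz\qquad(x\in B)\]
for some $\sigma>0$. Starting from the bound $|\varphi_t(y-z)|\le t^{-n}\chi_{B(y,t)}(z)$ gives $A_\alpha(f_2)(y,t)\le t^{-n}\int_{B(y,t)\cap(2B)^c}|f|$. I would then decompose $(2B)^c$ into the annuli $A_k=2^{k+1}B\setminus 2^kB$ and carry out the $dy\,dt$ integration in the definition of $g^*_{\lambda,\alpha}$. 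For $x\in B$ and $z\in A_k$ the support constraint $|y-z|<t$ together with the weight $(t/(t+|x-y|))^{\lambda n}$ confines the contribution to a region of $(y,t)$-space whose size is controlled by $|A_k|$, and integration produces the geometric decay $2^{-k\sigma}$; the threshold $\lambda>(3n+2\alpha)/n$ is precisely what makes $\sigma$ positive once both the crude $L^\infty$ bound and the $\alpha$-Hölder improvement of $\varphi$ are exploited. Executing this calculation cleanly is the main technical obstacle.

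Once the pointwise estimate is in hand, Chebyshev and Fubini yield
\[\bigl|\{x\in B:g^*_{\lambda,\alpha}(f_2)(x)>t/2\}\bigr|\le\frac{2|B|}{t}\sum_{k=1}^{\infty}2^{-k\sigma}\frac{\Phi(2^{k+1}r)}{|2^{k+1}B|}\|f\|_{L^{1,\Phi}}.\]
Inserting $\Phi(2^{k+1}r)\le D^{k+1}\Phi(r)$ and $|2^{k+1}B|=2^{n(k+1)}|B|$, the right-hand side becomes
\[\frac{C\Phi(r)}{t}\|f\|_{L^{1,\Phi}}\sum_{k=1}^{\infty}\Bigl(\frac{D}{2^n}\Bigr)^{\!k+1}2^{-k\sigma}.\]
The hypothesis $D(\Phi)<2^n$ (together with $\sigma>0$) makes this geometric series converge, producing the required bound for $f_2$. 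Combining with the estimate for $f_1$ finishes the proof.
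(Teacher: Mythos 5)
Your overall plan — local/global decomposition, weak-$(1,1)$ on the local piece, a pointwise bound on the tail — matches the paper's strategy. However there are two genuine gaps.

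First, you invoke the weak-$(1,1)$ bound for $g^*_{\lambda,\alpha}$ as a known fact, citing Wilson. What Wilson actually proves (Theorem B of the paper) is the weak-$(1,1)$ bound for the intrinsic Lusin function $\mathcal S_\alpha$, not for $g^*_{\lambda,\alpha}$. The paper devotes Theorem~5.1 and its full Calder\'on--Zygmund proof precisely to this point: one decomposes $g^*_{\lambda,\alpha}$ into $\mathcal S_\alpha$ plus weighted $\mathcal S_{\alpha,2^j}$ via the inequality $g^*_{\lambda,\alpha}(f)^2 \le C\bigl[\mathcal S_\alpha(f)^2 + \sum_{j\ge1}2^{-j\lambda n}\mathcal S_{\alpha,2^j}(f)^2\bigr]$, uses Lemma~4.1 for the good part, and exploits the vanishing moment of the Calder\'on--Zygmund bad pieces together with the $\alpha$-H\"older regularity of $\varphi$ — that is exactly where the threshold $\lambda>(3n+2\alpha)/n$ comes from. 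You cannot simply cite this away; it is the hardest part of the theorem.

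Second, your tail estimate both is unproved (you acknowledge it as "the main technical obstacle") and rests on a misattribution of the threshold. You assert that the ``$\alpha$-H\"older improvement of $\varphi$'' is what makes your postulated decay $2^{-k\sigma}$ positive and that $\lambda>(3n+2\alpha)/n$ is precisely what is needed. But for the far piece $f_2$ there is no vanishing moment to pair against $\varphi$'s H\"older regularity, so that mechanism is unavailable, and $\alpha$ plays no role in the tail. Moreover, no annular decay $2^{-k\sigma}$ is needed or claimed in the paper: the annular $k$-sum converges purely because $D(\Phi)<2^n$ (inequality (2.6)). What the tail does need is only $\lambda>3$: using the same decomposition into $\mathcal S_{\alpha,2^j}$ and the pointwise bounds already proved in Theorems~1.2 and~1.4 (estimates (5.3) and (5.4)), one picks up a factor $2^{3jn/2}$ against the weight $2^{-j\lambda n/2}$, and the $j$-series converges iff $\lambda>3$. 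Since $(3n+2\alpha)/n>3$ this is automatic, but the $\alpha$-dependent strength of the hypothesis is consumed entirely by the local piece, not the tail.

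To repair the argument, prove (or at least carefully cite) the weak-$(1,1)$ bound for $g^*_{\lambda,\alpha}$, and replace your annular sketch by the decomposition $g^*_{\lambda,\alpha}(f_2)^2\lesssim \mathcal S_\alpha(f_2)^2+\sum_j 2^{-j\lambda n}\mathcal S_{\alpha,2^j}(f_2)^2$ followed by the pointwise bounds for $\mathcal S_\alpha(f_2)$ and $\mathcal S_{\alpha,2^j}(f_2)$ already available from the earlier theorems.
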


\begin{theorem}
Let $0<\alpha\le1$, $1<p<\infty$ and $b\in BMO(\mathbb R^n)$. Suppose that $\lambda>3$, $\Phi$ satisfies $(1.1)$ and $1\le D(\Phi)<2^n$, then there is a
constant $C>0$ independent of $f$ such that
\begin{equation*}
\big\|\big[b,g^*_{\lambda,\alpha}\big](f)\big\|_{L^{p,\Phi}}\le C\|f\|_{L^{p,\Phi}}.
\end{equation*}
\end{theorem}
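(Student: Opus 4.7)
The plan is to follow the same template used for Theorem 1.3 (the commutator of the intrinsic area integral), but with the pointwise annular estimate for $\mathcal S_\alpha$ replaced by the pointwise estimate for $g^*_{\lambda,\alpha}$ that is developed in the proof of Theorem 1.4. Fix a ball $B=B(x_0,r)$ and decompose $f=f_1+f_2$ with $f_1=f\chi_{2B}$; by subadditivity of the commutator in $f$, it suffices to bound each of $\|[b,g^*_{\lambda,\alpha}](f_i)\|_{L^p(B)}$ for $i=1,2$ separately, and then to divide by $\Phi(r)^{1/p}$.

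For the local piece $f_1$, I would invoke the $L^p(\mathbb R^n)\to L^p(\mathbb R^n)$ boundedness of $[b,g^*_{\lambda,\alpha}]$ (valid under $\lambda>3$ and $b\in BMO$, either cited from \cite{wilson2,wang1} or obtained by a standard Calder\'on--Zygmund/commutator argument from the $L^p$ boundedness of $g^*_{\lambda,\alpha}$) to get
\[
\|[b,g^*_{\lambda,\alpha}](f_1)\|_{L^p(B)}\le C\|b\|_{BMO}\|f\|_{L^p(2B)}\le C\|b\|_{BMO}\|f\|_{L^{p,\Phi}}\,\Phi(2r)^{1/p},
\]
and the doubling condition (1.1) absorbs the factor $\Phi(2r)/\Phi(r)\le D$ into a constant.

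For the global piece $f_2$, I would write $b(x)-b(z)=[b(x)-b_B]+[b_B-b(z)]$, where $b_B$ denotes the average of $b$ over $B$, so that pointwise
\[
|[b,g^*_{\lambda,\alpha}](f_2)(x)|\le |b(x)-b_B|\,g^*_{\lambda,\alpha}(f_2)(x)+g^*_{\lambda,\alpha}\bigl((b_B-b)f_2\bigr)(x).
\]
The first summand is treated by combining the pointwise annular estimate for $g^*_{\lambda,\alpha}(f_2)(x)$, $x\in B$, extracted from the proof of Theorem 1.4---which supplies geometric decay $2^{-k\theta}$ on each shell $2^{k+1}B\setminus 2^kB$ because $\lambda>3$---with H\"older's inequality and the John--Nirenberg estimate $\|b-b_B\|_{L^q(B)}\le C\|b\|_{BMO}|B|^{1/q}$ applied to the factor $|b(x)-b_B|$. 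For the second summand, I would expand $f_2=\sum_{k\ge 1}f\chi_{2^{k+1}B\setminus 2^kB}$, apply the same pointwise shell estimate from Theorem 1.4 but with $f$ replaced by $(b_B-b)f$, and then split $b-b_B=(b-b_{2^{k+1}B})+(b_{2^{k+1}B}-b_B)$; H\"older and John--Nirenberg on $2^{k+1}B$ handle the first term with an additional factor $C\|b\|_{BMO}$, while the standard BMO identity $|b_{2^{k+1}B}-b_B|\le Ck\|b\|_{BMO}$ handles the second.

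The main technical obstacle is verifying summability after all these decompositions. Each shell contributes a term of order $(k+1)\cdot 2^{-k\theta}\cdot\Phi(2^{k+1}r)^{1/p}\cdot\|f\|_{L^{p,\Phi}}\cdot|B|^{1/p}$, where $\theta>0$ is the decay exponent inherited from the $g^*_{\lambda,\alpha}$ kernel under $\lambda>3$ (exactly as in Theorem 1.4). Using $\Phi(2^{k+1}r)\le D^{k+1}\Phi(r)$ from (1.1), the problem reduces to convergence of a series essentially of the form $\sum_{k\ge1}(k+1)\,D^{k/p}\,2^{-k\theta}$. The same interplay $D<2^n$ that made Theorem 1.4 work, combined with the strictness of $\lambda>3$, leaves enough room in the geometric ratio $D^{1/p}/2^{\theta}<1$ to absorb the extra polynomial factor $(k+1)$ created by the BMO splitting. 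Summing the resulting convergent series, adding the two pieces, and dividing by $\Phi(r)^{1/p}$ then yields the claimed bound.
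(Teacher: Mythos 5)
Your overall plan is structurally sound and matches what the paper only sketches: the paper proves the $L^p$ boundedness of $[b,g^*_{\lambda,\alpha}]$ (Theorem 5.2, via the analyticity trick of \cite{alvarez,ding}) and then says ``by using the same arguments as in the proof of Theorems 1.3 and 1.4, we can also show the conclusion of Theorem 1.6.'' Your decomposition $f=f_1+f_2$, the use of the $L^p$ commutator bound on $f_1$, the splitting $b(x)-b(z)=[b(x)-b_B]+[b_B-b(z)]$ on the far piece, the per-shell H\"older/John--Nirenberg estimates, and the $|b_{2^{k+1}B}-b_B|\le C(k+1)\|b\|_*$ factor are exactly the blend of Sections 3 and 4 the paper has in mind.

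However, your account of why the series converges is wrong, and you should fix it before writing it out. You claim each shell contributes a factor ``$2^{-k\theta}$ \ldots inherited from the $g^*_{\lambda,\alpha}$ kernel under $\lambda>3$'' and conclude by studying $\sum_k (k+1)D^{k/p}2^{-k\theta}$. There is no such kernel-induced $k$-decay. The two conditions of the theorem act in completely separate places: the hypothesis $\lambda>3$ controls only the sum over apertures $j$, because the weight $2^{-j\lambda n/2}$ in (4.6) must beat the aperture cost $2^{3jn/2}$ coming from the pointwise estimate of $\mathcal S_{\alpha,2^j}(f_2)$ (and the cost $2^{jn/\min(p,2)}$ from Lemmas 4.1--4.3 for the local piece). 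The sum over shells $k$ is controlled entirely by $D(\Phi)<2^n$: the shell estimate (2.2)--(2.3) carries the factor $|B(x_0,2^{k+1}r)|^{-1/p}$, so after dividing by $\Phi(r)^{1/p}$ one lands on
\[
(k+1)\cdot\frac{|B(x_0,r)|^{1/p}}{\Phi(r)^{1/p}}\cdot\frac{\Phi(2^{k+1}r)^{1/p}}{|B(x_0,2^{k+1}r)|^{1/p}}
\;\le\; C\,(k+1)\left(\frac{D(\Phi)}{2^n}\right)^{(k+1)/p},
\]
which is summable precisely because $D(\Phi)<2^n$, no help from $\lambda$ needed. In your proposed expression the factor $|B(x_0,2^{k+1}r)|^{-1/p}$ was dropped and a fictitious $2^{-k\theta}$ inserted in its place; note that without the ball-volume factor the series $\sum_k (k+1)D^{k/p}$ diverges (since $D\ge1$), so the ``interplay'' you describe is not just imprecise, it would mislead you into looking for a decay that is not there. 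Once you replace $2^{-k\theta}$ by the geometric ratio $|B(x_0,r)|^{1/p}/|B(x_0,2^{k+1}r)|^{1/p}=2^{-(k+1)n/p}$ and keep the $j$- and $k$-sums separate, the argument closes exactly as in (3.6), (3.8) of the paper with the extra $\sum_j 2^{-j\lambda n/2}(2^{3jn/2}+2^{jn/\min(p,2)})<\infty$ factor.
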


In \cite{wilson1}, Wilson also showed that for any $0<\alpha\le1$, the functions $\mathcal S_\alpha(f)(x)$ and $g_\alpha(f)(x)$ are pointwise comparable. Thus, as a direct consequence of Theorems 1.1, 1.2 and 1.3, we obtain the following

\newtheorem{corollary}[theorem]{Corollary}

\begin{corollary}
Let $0<\alpha\le1$ and $1<p<\infty$. Assume that $\Phi$ satisfies $(1.1)$ and $1\le D(\Phi)<2^n$, then there is a
constant $C>0$ independent of $f$ such that
\begin{equation*}
\big\|g_\alpha(f)\big\|_{L^{p,\Phi}}\le C\|f\|_{L^{p,\Phi}}.
\end{equation*}
\end{corollary}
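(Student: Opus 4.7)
The plan is to deduce the corollary directly from Theorem 1.1 by exploiting the pointwise equivalence between $g_\alpha(f)$ and $\mathcal S_\alpha(f)$ recorded in Wilson's paper \cite{wilson1}. Concretely, Wilson established that there exist constants $c_1,c_2>0$, depending only on $n$ and $\alpha$, such that
\begin{equation*}
c_1\,g_\alpha(f)(x)\le \mathcal S_\alpha(f)(x)\le c_2\,g_\alpha(f)(x)\quad\text{for every }x\in\mathbb R^n.
\end{equation*}
Thus the corollary reduces to transferring this pointwise inequality to the level of the $L^{p,\Phi}$ norm and then invoking Theorem 1.1.

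The transfer is immediate from the definition of $L^{p,\Phi}$. First I would fix an arbitrary ball $B(x_0,r)\subset\mathbb R^n$ and raise the pointwise inequality $g_\alpha(f)(x)\le c_1^{-1}\mathcal S_\alpha(f)(x)$ to the $p$-th power, obtaining
\begin{equation*}
\int_{B(x_0,r)}\bigl|g_\alpha(f)(x)\bigr|^p\,dx\le c_1^{-p}\int_{B(x_0,r)}\bigl|\mathcal S_\alpha(f)(x)\bigr|^p\,dx.
\end{equation*}
By Theorem 1.1, the right-hand side is bounded by $c_1^{-p}\|\mathcal S_\alpha(f)\|_{L^{p,\Phi}}^p\,\Phi(r)\le C^p\|f\|_{L^{p,\Phi}}^p\,\Phi(r)$. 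Since this bound holds uniformly in $x_0$ and $r$, the infimum characterization of the $L^{p,\Phi}$-norm yields $\|g_\alpha(f)\|_{L^{p,\Phi}}\le C\|f\|_{L^{p,\Phi}}$, which is the desired inequality.

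There is essentially no analytical obstacle in this argument; the only point that requires care is the verification that the pointwise comparability constants from \cite{wilson1} are independent of $f$ and $x$, so that the constant $C$ in the final estimate depends only on $n$, $\alpha$, $p$, and the doubling constant $D(\Phi)$ inherited from Theorem 1.1. Once this is noted, the deduction is a one-line consequence of monotonicity of the defining inequality (1.2) of $L^{p,\Phi}$.
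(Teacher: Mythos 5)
Your argument matches the paper's: both appeal to Wilson's pointwise comparability of $\mathcal S_\alpha(f)$ and $g_\alpha(f)$ and then deduce the Morrey-norm bound from Theorem 1.1. The explicit ball-by-ball transfer you spell out is exactly what the paper leaves implicit when it states the corollary as a ``direct consequence'' of Theorem 1.1.
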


\begin{corollary}
Let $0<\alpha\le1$ and $p=1$. Assume that $\Phi$ satisfies $(1.1)$ and $1\le D(\Phi)<2^n$, then there is a
constant $C>0$ independent of $f$ such that
\begin{equation*}
\big\|g_\alpha(f)\big\|_{WL^{1,\Phi}}\le C\|f\|_{L^{1,\Phi}}.
\end{equation*}
\end{corollary}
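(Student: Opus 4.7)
The plan is to deduce Corollary 1.2 almost immediately from Theorem 1.2, using Wilson's pointwise comparability result. Concretely, Wilson proved in \cite{wilson1} that for every $0<\alpha\le 1$ there exist constants $c_1,c_2>0$, depending only on $n$ and $\alpha$, such that
\begin{equation*}
c_1\, g_\alpha(f)(x)\le \mathcal S_\alpha(f)(x)\le c_2\, g_\alpha(f)(x)
\end{equation*}
for all $f\in L^1_{\mathrm{loc}}(\mathbb R^n)$ and almost every $x\in\mathbb R^n$. Since $g_\alpha(f)\le c_1^{-1}\,\mathcal S_\alpha(f)$ pointwise, a fortiori any super-level set of $g_\alpha(f)$ is contained in a super-level set of $\mathcal S_\alpha(f)$ at a proportional threshold.

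First I would use this containment to compare the weak norms directly from the definition \eqref{} of $WL^{1,\Phi}$: for each $x_0\in\mathbb R^n$, $r>0$ and $\lambda>0$,
\begin{equation*}
\lambda\cdot\bigl|\{x\in B(x_0,r):g_\alpha(f)(x)>\lambda\}\bigr|
\le \lambda\cdot\bigl|\{x\in B(x_0,r):\mathcal S_\alpha(f)(x)>c_1\lambda\}\bigr|
\le c_1^{-1}\,\bigl(c_1\lambda\bigr)\cdot\bigl|\{x\in B(x_0,r):\mathcal S_\alpha(f)(x)>c_1\lambda\}\bigr|.
\end{equation*}
Taking the supremum over $\lambda>0$ on both sides (and re-indexing $c_1\lambda\to\mu$ on the right) yields
\begin{equation*}
\|g_\alpha(f)\|_{WL^{1,\Phi}}\le c_1^{-1}\,\|\mathcal S_\alpha(f)\|_{WL^{1,\Phi}}.
\end{equation*}

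Then I would apply Theorem 1.2 (whose hypotheses---namely $0<\alpha\le 1$, $p=1$, and $1\le D(\Phi)<2^n$---match exactly those of the corollary) to bound $\|\mathcal S_\alpha(f)\|_{WL^{1,\Phi}}\le C\|f\|_{L^{1,\Phi}}$, and combine the two estimates. There is no substantive obstacle here: the only nontrivial input is Wilson's pointwise equivalence, and the weak-norm comparison above is a routine consequence of the definition of $WL^{1,\Phi}$. All the analytic work has been absorbed into Theorem 1.2.
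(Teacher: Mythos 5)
Your proposal is correct and follows essentially the same route as the paper: the paper simply invokes Wilson's pointwise comparability of $\mathcal S_\alpha(f)$ and $g_\alpha(f)$ and declares the corollary a direct consequence of Theorem 1.2. You have merely spelled out the routine weak-norm transfer that the paper leaves implicit.
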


\begin{corollary}
Let $0<\alpha\le1$, $1<p<\infty$ and $b\in BMO(\mathbb R^n)$. Suppose that $\Phi$ satisfies $(1.1)$ and $1\le D(\Phi)<2^n$, then there is a
constant $C>0$ independent of $f$ such that
\begin{equation*}
\big\|\big[b,g_\alpha\big](f)\big\|_{L^{p,\Phi}}\le C\|f\|_{L^{p,\Phi}}.
\end{equation*}
\end{corollary}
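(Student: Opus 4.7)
The plan is to reduce Corollary 1.3 (for $[b,g_\alpha]$) directly to Theorem 1.3 (for $[b,\mathcal S_\alpha]$), by upgrading the pointwise comparability of $g_\alpha(f)$ and $\mathcal S_\alpha(f)$ already cited from Wilson to the level of the commutators. Once the pointwise bound $[b,g_\alpha](f)(x)\le C\,[b,\mathcal S_\alpha](f)(x)$ is established, the $L^{p,\Phi}$ estimate will be immediate.

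The first step is to recognize each commutator as the base operator applied to a new, $x$-dependent function. Fix $x\in\mathbb R^n$ and set $F_x(z):=[b(x)-b(z)]f(z)$, which is locally integrable since $b\in BMO\subset L^1_{\mathrm{loc}}$ and $f\in L^{p,\Phi}\subset L^p_{\mathrm{loc}}$. Because $b(x)$ is a constant once $x$ is frozen, I pull the factor $b(x)-b(z)$ under the convolution integral defining $A_\alpha$, obtaining the identity
$$\sup_{\varphi\in\mathcal C_\alpha}\bigg|\int_{\mathbb R^n}[b(x)-b(z)]\varphi_t(y-z)f(z)\,dz\bigg|=A_\alpha(F_x)(y,t).$$
Substituting this identity into the definitions of the two commutators yields
$$[b,g_\alpha](f)(x)=g_\alpha(F_x)(x),\qquad [b,\mathcal S_\alpha](f)(x)=\mathcal S_\alpha(F_x)(x).$$

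The second and final step is to invoke Wilson's pointwise equivalence $g_\alpha(h)(z)\le C\,\mathcal S_\alpha(h)(z)$, the same ingredient used in the excerpt to derive the commutator-free corollaries. The key feature is that this inequality holds with an absolute constant, uniformly in the input function $h$ and the base point $z$; in particular I may apply it with $h=F_x$ at the matching point $z=x$ to conclude
$$[b,g_\alpha](f)(x)=g_\alpha(F_x)(x)\le C\,\mathcal S_\alpha(F_x)(x)=C\,[b,\mathcal S_\alpha](f)(x).$$
Taking $L^{p,\Phi}$ norms on both sides and plugging the result into Theorem 1.3 closes the argument. I do not foresee a serious obstacle; the only point that requires care is the observation that $F_x$ genuinely depends on the base point $x$, so Wilson's inequality must be applied pointwise at the matching point $z=x$ rather than used to compare global $L^p$-norms of $F_x$ viewed as a function of $z$. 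Since Wilson's estimate is pointwise and uniform, this subtlety costs nothing.
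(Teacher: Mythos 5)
Your proposal is correct and fills in exactly the step the paper leaves implicit: it identifies $[b,g_\alpha](f)(x)=g_\alpha(F_x)(x)$ and $[b,\mathcal S_\alpha](f)(x)=\mathcal S_\alpha(F_x)(x)$ with $F_x(z)=[b(x)-b(z)]f(z)$, then invokes Wilson's pointwise comparability (whose constant is uniform in the input function) at the matching point $z=x$ to get $[b,g_\alpha](f)(x)\le C\,[b,\mathcal S_\alpha](f)(x)$, and finally applies Theorem 1.3. This is the same reduction the paper intends when it calls the corollary a ``direct consequence'' of the pointwise equivalence of $g_\alpha$ and $\mathcal S_\alpha$.
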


Throughout this article, $B=B(x_0,r)$ denotes the ball with the center $x_0$ and radius $r$. Given a ball $B$ and $\lambda>0$, $\lambda B$ denotes the ball with the same center as $B$ whose radius is $\lambda$ times that of $B$. For any measurable set $E$ in $\mathbb R^n$, we also denote the Lebesgue measure of $E$ by $|E|$. Moreover, $C$ always denote a positive constant independent of the main parameters involved, but it may be different from line to line.

\section{Proofs of Theorems 1.1 and 1.2}

\begin{proof}[Proof of Theorem 1.1]
Let $f\in L^{p,\Phi}$ with $1<p<\infty$. For any ball $B=B(x_0,r)\subseteq\mathbb R^n$ with $x_0\in\mathbb R^n$ and $r>0$, we write $f=f_1+f_2$, where $f_1=f\chi_{_{2B}}$, $\chi_{_{2B}}$ denotes the characteristic function of $2B=B(x_0,2r)$. Since $\mathcal S_\alpha$($0<\alpha\le1$) is a sublinear operator, then we have
\begin{equation*}
\begin{split}
&\frac{1}{\Phi(r)^{1/p}}\bigg(\int_{B(x_0,r)}|\mathcal S_\alpha(f)(x)|^p\,dx\bigg)^{1/p}\\
\le\,&\frac{1}{\Phi(r)^{1/p}}\bigg(\int_{B(x_0,r)}|\mathcal S_\alpha(f_1)(x)|^p\,dx\bigg)^{1/p}+
\frac{1}{\Phi(r)^{1/p}}\bigg(\int_{B(x_0,r)}|\mathcal S_\alpha(f_2)(x)|^p\,dx\bigg)^{1/p}\\
=\,&I_1+I_2.
\end{split}
\end{equation*}
For the term $I_1$, by Theorem A and the condition (1.1), we obtain
\begin{equation*}
\begin{split}
I_1&\le C\cdot\frac{1}{\Phi(r)^{1/p}}\bigg(\int_{2B}|f(x)|^p\,dx\bigg)^{1/p}\\
&\le C\|f\|_{L^{p,\Phi}}\cdot\frac{\Phi(2r)^{1/p}}{\Phi(r)^{1/p}}\\
&\le C\|f\|_{L^{p,\Phi}}.
\end{split}
\end{equation*}
We now turn to estimate the other term $I_2$. For any $\varphi\in{\mathcal C}_\alpha$, $0<\alpha\le1$ and $(y,t)\in\Gamma(x)$, we have
\begin{align}
\big|f_2*\varphi_t(y)\big|&=\bigg|\int_{(2B)^c}\varphi_t(y-z)f(z)\,dz\bigg|\notag\\
&\le C\cdot t^{-n}\int_{(2B)^c\cap\{z:|y-z|\le t\}}|f(z)|\,dz\notag\\
&\le C\cdot t^{-n}\sum_{k=1}^\infty\int_{(2^{k+1}B\backslash 2^{k}B)\cap\{z:|y-z|\le t\}}|f(z)|\,dz.
\end{align}
For any $x\in B$, $(y,t)\in\Gamma(x)$ and $z\in\big(2^{k+1}B\backslash 2^{k}B\big)\cap B(y,t)$, then by a direct computation, we can easily see that
\begin{equation*}
2t\ge |x-y|+|y-z|\ge|x-z|\ge|z-x_0|-|x-x_0|\ge 2^{k-1}r.
\end{equation*}
Thus, by using the above inequality (2.1) and Minkowski's integral inequality, we deduce
\begin{align}
\big|\mathcal S_\alpha(f_2)(x)\big|&=\left(\iint_{\Gamma(x)}\sup_{\varphi\in{\mathcal C}_\alpha}|f_2*\varphi_t(y)|^2\frac{dydt}{t^{n+1}}\right)^{1/2}\notag\\
&\le C\left(\int_{2^{k-2}r}^\infty\int_{|x-y|<t}\bigg|t^{-n}\sum_{k=1}^\infty\int_{2^{k+1}B\backslash 2^{k}B}|f(z)|\,dz\bigg|^2\frac{dydt}{t^{n+1}}\right)^{1/2}\notag\\
&\le C\bigg(\sum_{k=1}^\infty\int_{2^{k+1}B\backslash 2^{k}B}|f(z)|\,dz\bigg)\bigg(\int_{2^{k-2}r}^\infty\frac{dt}{t^{2n+1}}\bigg)^{1/2}\notag\\
&\le C\sum_{k=1}^\infty\frac{1}{|B(x_0,2^{k+1}r)|}\int_{2^{k+1}B\backslash 2^{k}B}|f(z)|\,dz.
\end{align}
An application of H\"older's inequality leads to that
\begin{align}
\frac{1}{|B(x_0,2^{k+1}r)|}\int_{2^{k+1}B\backslash 2^{k}B}|f(z)|\,dz&\le\frac{1}{|B(x_0,2^{k+1}r)|^{1/p}}\bigg(\int_{2^{k+1}B}|f(z)|^p\,dz\bigg)^{1/p}\notag\\
&\le C\|f\|_{L^{p,\Phi}}\cdot\frac{\Phi(2^{k+1}r)^{1/p}}{|B(x_0,2^{k+1}r)|^{1/p}}.
\end{align}
Hence, substituting the above inequality (2.3) into (2.2), we have that for all $x\in B=B(x_0,r)$,
\begin{equation}
\big|\mathcal S_\alpha(f_2)(x)\big|\le C\|f\|_{L^{p,\Phi}}\sum_{k=1}^\infty\frac{\Phi(2^{k+1}r)^{1/p}}{|B(x_0,2^{k+1}r)|^{1/p}},
\end{equation}
which implies
\begin{equation*}
I_2\le C\|f\|_{L^{p,\Phi}}\sum_{k=1}^\infty\frac{|B(x_0,r)|^{1/p}}{\Phi(r)^{1/p}}
\cdot\frac{\Phi(2^{k+1}r)^{1/p}}{|B(x_0,2^{k+1}r)|^{1/p}}.
\end{equation*}
Since $1\le D(\Phi)<2^n$, then by using the doubling condition (1.1) of $\Phi$, we know that the above series is bounded by an absolute constant.
\begin{align}
\sum_{k=1}^\infty\frac{|B(x_0,r)|^{1/p}}{\Phi(r)^{1/p}}\cdot\frac{\Phi(2^{k+1}r)^{1/p}}{|B(x_0,2^{k+1}r)|^{1/p}}
&\le C\sum_{k=1}^\infty\left(\frac{D(\Phi)}{2^{n}}\right)^{{(k+1)}/p}\notag\\
&\le C.
\end{align}
Therefore
\begin{equation*}
I_2\le C\|f\|_{L^{p,\Phi}}.
\end{equation*}
Combining the above estimates for $I_1$ and $I_2$ and then taking the supremum over all balls $B=B(x_0,r)\subseteq\mathbb R^n$, we complete the proof of Theorem 1.1.
\end{proof}

\begin{proof}[Proof of Theorem 1.2]
Let $f\in L^{1,\Phi}$. Fix a ball $B=B(x_0,r)\subseteq\mathbb R^n$ and decompose $f=f_1+f_2$, where $f_1=f\chi_{_{2B}}$. For any given $\lambda>0$, we write
\begin{equation*}
\begin{split}
&\big|\big\{x\in B(x_0,r):|\mathcal S_\alpha(f)(x)|>\lambda\big\}\big|\\
\le\,& \big|\big\{x\in B(x_0,r):|\mathcal S_\alpha(f_1)(x)|>\lambda/2\big\}\big|+\big|\big\{x\in B(x_0,r):|\mathcal S_\alpha(f_2)(x)|>\lambda/2\big\}\big|\\
  =\,&J_1+J_2.
\end{split}
\end{equation*}
Theorem B and the condition (1.1) imply
\begin{equation*}
\begin{split}
J_1&\le\frac{C}{\lambda}\int_{2B}|f(y)|\,dy\\
&\le\frac{C\cdot\Phi(2r)}{\lambda}\|f\|_{L^{1,\Phi}}\\
&\le\frac{C\cdot\Phi(r)}{\lambda}\|f\|_{L^{1,\Phi}}.
\end{split}
\end{equation*}
We turn our attention to the estimate of $J_2$. Using the preceding estimate (2.2), we can deduce that for all $x\in B(x_0,r)$,
\begin{equation*}
\begin{split}
\big|\mathcal S_\alpha(f_2)(x)\big|&\le C\sum_{k=1}^\infty\frac{1}{|B(x_0,2^{k+1}r)|}\int_{2^{k+1}B\backslash 2^{k}B}|f(z)|\,dz\\
&\le C\|f\|_{L^{1,\Phi}}\sum_{k=1}^\infty\frac{\Phi(2^{k+1}r)}{|B(x_0,2^{k+1}r)|}\\
&=C\|f\|_{L^{1,\Phi}}\cdot\frac{\Phi(r)}{|B(x_0,r)|}\sum_{k=1}^\infty
\frac{|B(x_0,r)|}{\Phi(r)}\cdot\frac{\Phi(2^{k+1}r)}{|B(x_0,2^{k+1}r)|}.
\end{split}
\end{equation*}
Note that $1\le D(\Phi)<2^n$. Arguing as in the proof of (2.5), we can get
\begin{align}
\sum_{k=1}^\infty\frac{|B(x_0,r)|}{\Phi(r)}\cdot\frac{\Phi(2^{k+1}r)}{|B(x_0,2^{k+1}r)|}&\le
\sum_{k=1}^\infty\left(\frac{D(\Phi)}{2^n}\right)^{k+1}\notag\\
&\le C.
\end{align}
Hence
\begin{align}
\big|\mathcal S_\alpha(f_2)(x)\big|&\le C\|f\|_{L^{1,\Phi}}\cdot\frac{\Phi(r)}{|B(x_0,r)|}.
\end{align}
If $\big\{x\in B(x_0,r):|\mathcal S_\alpha(f_2)(x)|>\lambda/2\big\}=\O$, then the inequality
\begin{equation*}
J_2\le\frac{C\cdot\Phi(r)}{\lambda}\|f\|_{L^{1,\Phi}}
\end{equation*}
holds trivially. Now we may suppose that $\big\{x\in B(x_0,r):|\mathcal S_\alpha(f_2)(x)|>\lambda/2\big\}\neq\O$,
then by the inequality (2.7), we can see that
\begin{equation*}
\lambda\le C\|f\|_{L^{1,\Phi}}\cdot\frac{\Phi(r)}{|B(x_0,r)|},
\end{equation*}
which is equivalent to
\begin{equation*}
|B(x_0,r)|\le \frac{C\cdot\Phi(r)}{\lambda}\|f\|_{L^{1,\Phi}}.
\end{equation*}
Therefore
\begin{equation*}
J_2\le|B(x_0,r)|\le \frac{C\cdot\Phi(r)}{\lambda}\|f\|_{L^{1,\Phi}}.
\end{equation*}
Summing up the above estimates for $J_1$ and $J_2$, and then taking the supremum over all balls $B=B(x_0,r)\subseteq\mathbb R^n$ and all $\lambda>0$, we finish the proof of Theorem 1.2.
\end{proof}

\section{Proof of Theorem 1.3}

Before proving the main theorem in this section, let us first recall the definition of the space of $BMO(\mathbb R^n)$ (Bounded Mean Oscillation). A locally integrable function $b$ is said to be in $BMO(\mathbb R^n)$ if
\begin{equation*}
\|b\|_*=\sup_{B}\frac{1}{|B|}\int_B|b(x)-b_B|\,dx<\infty,
\end{equation*}
where $b_B$ stands for the average of $b$ on $B$, i.e., $b_B=\frac{1}{|B|}\int_B b(y)\,dy$ and the supremum is taken
over all balls $B$ in $\mathbb R^n$. Modulo constants, the space $BMO(\mathbb R^n)$ is a Banach space with respect to the norm $\|\cdot\|_*$.

\begin{theorem}[\cite{duoand,john}]
Assume that $b\in BMO(\mathbb R^n)$. Then for any $1\le p<\infty$, we have
\begin{equation*}
\sup_B\bigg(\frac{1}{|B|}\int_B\big|b(x)-b_B\big|^p\,dx\bigg)^{1/p}\le C\|b\|_*.
\end{equation*}
\end{theorem}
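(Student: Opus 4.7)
The plan is to derive the $L^p$-averaged bound from the John--Nirenberg inequality, which is the standard route. Recall that the John--Nirenberg inequality asserts that for any $b\in BMO(\mathbb R^n)$, any ball $B$, and any $\lambda>0$,
\[
\bigl|\{x\in B:|b(x)-b_B|>\lambda\}\bigr|\le c_1|B|\exp\!\bigl(-c_2\lambda/\|b\|_*\bigr),
\]
where $c_1,c_2>0$ depend only on the dimension $n$. Granting this, the claim will follow by a layer-cake computation.

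The case $p=1$ is immediate from the definition of $\|b\|_*$ with constant $C=1$, so assume $p>1$. First I would rewrite the $L^p$ average using the distribution-function formula:
\[
\frac{1}{|B|}\int_B|b(x)-b_B|^p\,dx=\frac{p}{|B|}\int_0^\infty\lambda^{p-1}\bigl|\{x\in B:|b(x)-b_B|>\lambda\}\bigr|\,d\lambda.
\]
Then I would substitute the John--Nirenberg bound and evaluate the resulting integral, which reduces to a Gamma-function computation, namely $\int_0^\infty\lambda^{p-1}e^{-c_2\lambda/\|b\|_*}\,d\lambda=\Gamma(p)\bigl(\|b\|_*/c_2\bigr)^p$. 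This yields
\[
\frac{1}{|B|}\int_B|b(x)-b_B|^p\,dx\le c_1 p\,\Gamma(p)\,c_2^{-p}\|b\|_*^p,
\]
and taking the $p$-th root and the supremum over all balls produces the claimed inequality with $C=\bigl(c_1p\,\Gamma(p)\bigr)^{1/p}/c_2$.

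The main obstacle is really the John--Nirenberg inequality itself, which is not trivial. It is proved by a Calder\'on--Zygmund stopping-time argument carried out inside the fixed ball $B$: one iteratively selects a family of subballs on which the mean oscillation of $b$ exceeds a fixed multiple of $\|b\|_*$, and shows via the BMO condition that the total measure of the selected family shrinks geometrically with each generation, which upon iteration yields exponential decay in $\lambda$. Since the paper cites \cite{duoand,john} for this classical fact, I would quote it as a black box rather than reprove it, and focus the write-up on the short passage from distributional decay to the $L^p$ estimate outlined above.
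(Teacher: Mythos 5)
Your proposal is correct and is exactly the standard argument found in the cited references: the paper itself gives no proof of Theorem 3.1, treating it as the classical John--Nirenberg $L^p$ estimate from \cite{duoand,john}, and your derivation via the exponential distributional inequality plus the layer-cake formula and a Gamma-function evaluation is precisely that standard route. The computation is right, including the case split at $p=1$ and the observation that the resulting constant $C=(c_1p\,\Gamma(p))^{1/p}/c_2$ depends only on $p$ and $n$, which is all the statement requires.
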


Given a real-valued function $b\in BMO(\mathbb R^n)$, we shall follow the idea developed in \cite{alvarez,ding} and denote $F(\xi)=e^{\xi[b(x)-b(z)]}$, $\xi\in\mathbb C$. Then by the analyticity of $F(\xi)$ on $\mathbb C$ and the Cauchy integral formula, we get
\begin{equation*}
\begin{split}
b(x)-b(z)=F'(0)&=\frac{1}{2\pi i}\int_{|\xi|=1}\frac{F(\xi)}{\xi^2}\,d\xi\\
&=\frac{1}{2\pi}\int_0^{2\pi}e^{e^{i\theta}[b(x)-b(z)]}e^{-i\theta}\,d\theta.
\end{split}
\end{equation*}
Thus, for any $\varphi\in{\mathcal C}_\alpha$, $0<\alpha\le1$, we obtain
\begin{align}
\bigg|\int_{\mathbb R^n}\big[b(x)-b(z)\big]\varphi_t(y-z)f(z)\,dz\bigg|&=
\bigg|\frac{1}{2\pi}\int_0^{2\pi}\bigg(\int_{\mathbb R^n}\varphi_t(y-z)e^{-e^{i\theta}b(z)}f(z)\,dz\bigg)
e^{e^{i\theta}b(x)}e^{-i\theta}\,d\theta\bigg|\notag\\
&\le\frac{1}{2\pi}\int_0^{2\pi}\sup_{\varphi\in{\mathcal C}_\alpha}\bigg|\int_{\mathbb R^n}\varphi_t(y-z)e^{-e^{i\theta}b(z)}f(z)\,dz\bigg|e^{\cos\theta\cdot b(x)}\,d\theta\notag\\
&\le\frac{1}{2\pi}\int_0^{2\pi}A_\alpha\big(e^{-e^{i\theta}b}\cdot f\big)(y,t)\cdot e^{\cos\theta\cdot b(x)}\,d\theta.
\end{align}
So we have
\begin{equation*}
\big|\big[b,\mathcal S_\alpha\big](f)(x)\big|\le\frac{1}{2\pi}\int_0^{2\pi}
\mathcal S_\alpha\big(e^{-e^{i\theta}b}\cdot f\big)(x)\cdot e^{\cos\theta\cdot b(x)}\,d\theta.
\end{equation*}
Then in view of Theorem A, by using the same arguments as in \cite{ding}, we can also show the following (see \cite{wang1} for the weighted case).

\begin{theorem}
Let $0<\alpha\le1$ and $1<p<\infty$. Then the commutator $\big[b,\mathcal S_\alpha\big]$ is bounded from $L^p(\mathbb R^n)$ into itself whenever $b\in BMO(\mathbb R^n)$.
\end{theorem}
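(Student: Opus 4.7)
The plan is to run the Alvarez--Bagby--Kurtz--P\'erez--Torres commutator argument, which converts the unweighted $L^p$ bound for $[b,\mathcal S_\alpha]$ into a weighted $L^p$ bound for $\mathcal S_\alpha$ itself. The pointwise ingredient has already been assembled in the excerpt: starting from the Cauchy representation of $b(x)-b(z)$ along a contour $|\xi|=r$ (the case $r=1$ is worked out just before the statement, and shrinking $r$ will be needed below), one arrives at
\begin{equation*}
\big|[b,\mathcal S_\alpha](f)(x)\big|\le\frac{1}{2\pi r}\int_0^{2\pi}\mathcal S_\alpha\big(e^{-re^{i\theta}b}\,f\big)(x)\,e^{r\cos\theta\cdot b(x)}\,d\theta.
\end{equation*}

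Next I would raise both sides to the $p$-th power, integrate over $\mathbb R^n$, and pull Minkowski's integral inequality past the $\theta$-integral. This reduces the theorem to the uniform-in-$\theta$ estimate
\begin{equation*}
\int_{\mathbb R^n}\big|\mathcal S_\alpha\big(e^{-re^{i\theta}b}\,f\big)(x)\big|^p\,e^{pr\cos\theta\cdot b(x)}\,dx\le C\int_{\mathbb R^n}|f(x)|^p\,dx.
\end{equation*}
Setting $w_\theta(x):=e^{pr\cos\theta\cdot b(x)}$, the key point is that $w_\theta\in A_p(\mathbb R^n)$ with constant uniform in $\theta$, once $r=r(p,n,\|b\|_*)>0$ is chosen small enough: the function $pr\cos\theta\,b$ lies in $BMO$ with norm at most $pr\|b\|_*$, and the John--Nirenberg inequality guarantees that the exponential of a BMO function of sufficiently small norm is an $A_p$ weight. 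Shrinking $r$ costs only a factor $r^{-1}\lesssim\|b\|_*$ in the final constant.

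Granted the weighted $L^p(w_\theta)$ boundedness of $\mathcal S_\alpha$, the theorem follows from the algebraic cancellation
\begin{equation*}
\big|e^{-re^{i\theta}b(x)}\big|^p\cdot e^{pr\cos\theta\cdot b(x)}\equiv 1,
\end{equation*}
which forces $\|e^{-re^{i\theta}b}f\|_{L^p(w_\theta)}=\|f\|_{L^p}$. Applying the weighted estimate to $g:=e^{-re^{i\theta}b}f$ then bounds the inner integral by $C\|f\|_{L^p}^p$ uniformly in $\theta$, and integrating over $[0,2\pi]$ closes the argument.

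The principal obstacle is the weighted $L^p(w)$ boundedness of $\mathcal S_\alpha$ for every $w\in A_p$: this is the nontrivial input, not derived in the excerpt but available from Wilson~\cite{wilson1,wilson2}. Once it is accepted, the commutator estimate is the routine template used in Ding~\cite{ding} and in the author's earlier weighted work~\cite{wang1}, and everything above reduces to bookkeeping around the Cauchy trick.
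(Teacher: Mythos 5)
Your proposal is correct and matches the paper's intent: the paper itself only gestures at the proof of Theorem 3.2 (``by using the same arguments as in [ding]''), and the argument it has in mind is exactly the Alvarez--Bagby--Kurtz--P\'erez conjugation trick you spell out, with the pointwise Cauchy-formula bound (3.1) as the starting point and the $A_p$-weighted $L^p$ boundedness of $\mathcal S_\alpha$ (from Wilson) as the external input. You correctly supply the detail the paper elides, namely that one must shrink the contour radius $r$ so that $e^{pr\cos\theta\cdot b}$ lands in $A_p$ uniformly in $\theta$ via John--Nirenberg, at the cost of the expected factor $r^{-1}\sim\|b\|_*$.
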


\begin{proof}[Proof of Theorem 1.3]
Let $f\in L^{p,\Phi}$ with $1<p<\infty$. For each fixed ball $B=B(x_0,r)\subseteq\mathbb R^n$, we let $f=f_1+f_2$, where $f_1=f\chi_{_{2B}}$. Then we can write
\begin{equation*}
\begin{split}
&\frac{1}{\Phi(r)^{1/p}}\bigg(\int_{B(x_0,r)}\big|\big[b,\mathcal S_\alpha\big](f)(x)\big|^p\,dx\bigg)^{1/p}\\
\le\,&\frac{1}{\Phi(r)^{1/p}}\bigg(\int_{B(x_0,r)}\big|\big[b,\mathcal S_\alpha\big](f_1)(x)\big|^p\,dx\bigg)^{1/p}+
\frac{1}{\Phi(r)^{1/p}}\bigg(\int_{B(x_0,r)}\big|\big[b,\mathcal S_\alpha\big](f_2)(x)\big|^p\,dx\bigg)^{1/p}\\
=\,&K_1+K_2.
\end{split}
\end{equation*}
Applying Theorem 3.2 and the condition (1.1), we thus obtain
\begin{align}
K_1&\le C\|b\|_*\cdot\frac{1}{\Phi(r)^{1/p}}\bigg(\int_{2B}|f(x)|^p\,dx\bigg)^{1/p}\notag\\
&\le C\|b\|_*\|f\|_{L^{p,\Phi}}\cdot\frac{\Phi(2r)^{1/p}}{\Phi(r)^{1/p}}\notag\\
&\le C\|b\|_*\|f\|_{L^{p,\Phi}}.
\end{align}
We now turn to deal with the term $K_2$. For any given $x\in B(x_0,r)$ and $(y,t)\in\Gamma(x)$, we have
\begin{equation*}
\begin{split}
\sup_{\varphi\in{\mathcal C}_\alpha}\bigg|\int_{\mathbb R^n}\big[b(x)-b(z)\big]\varphi_t(y-z)f_2(z)\,dz\bigg|&\le
\big|b(x)-b_B\big|\cdot\sup_{\varphi\in{\mathcal C}_\alpha}\bigg|\int_{\mathbb R^n}\varphi_t(y-z)f_2(z)\,dz\bigg|\\
&+\sup_{\varphi\in{\mathcal C}_\alpha}\bigg|\int_{\mathbb R^n}\big[b(z)-b_B\big]\varphi_t(y-z)f_2(z)\,dz\bigg|
\end{split}
\end{equation*}
Hence
\begin{equation*}
\begin{split}
\big|\big[b,\mathcal S_\alpha\big](f_2)(x)\big|&\le\big|b(x)-b_B\big|\cdot \mathcal S_\alpha(f_2)(x)\\
&+\left(\iint_{\Gamma(x)}\sup_{\varphi\in{\mathcal C}_\alpha}\bigg|\int_{\mathbb R^n}\big[b(z)-b_B\big]\varphi_t(y-z)f_2(z)\,dz\bigg|^2\frac{dydt}{t^{n+1}}\right)^{1/2}\\
&=\mbox{\upshape I+II}.
\end{split}
\end{equation*}
In the proof of Theorem 1.1, we have already proved that for any $x\in B(x_0,r)$,
\begin{equation}
\big|\mathcal S_\alpha(f_2)(x)\big|\le C\|f\|_{L^{p,\Phi}}\sum_{k=1}^\infty\frac{\Phi(2^{k+1}r)^{1/p}}{|B(x_0,2^{k+1}r)|^{1/p}},
\end{equation}
From the inequalities (2.5), (3.3) and Theorem 3.1, it follows that
\begin{align*}
\frac{1}{\Phi(r)^{1/p}}\bigg(\int_B \mbox{\upshape I}^p\,dx\bigg)^{1/p}&\le C\|f\|_{L^{p,\Phi}}\cdot\frac{1}{\Phi(r)^{1/p}}\sum_{k=1}^\infty\frac{\Phi(2^{k+1}r)^{1/p}}{|B(x_0,2^{k+1}r)|^{1/p}}
\cdot\bigg(\int_B\big|b(x)-b_B\big|^p\,dx\bigg)^{1/p}\notag\\
&\le C\|b\|_*\|f\|_{L^{p,\Phi}}\sum_{k=1}^\infty\frac{|B(x_0,r)|^{1/p}}{\Phi(r)^{1/p}}
\cdot\frac{\Phi(2^{k+1}r)^{1/p}}{|B(x_0,2^{k+1}r)|^{1/p}}\notag\\
\end{align*}
\begin{align}
&\le C\|b\|_*\|f\|_{L^{p,\Phi}}\sum_{k=1}^\infty\left(\frac{D(\Phi)}{2^{n}}\right)^{{(k+1)}/p}\notag\\
&\le C\|b\|_*\|f\|_{L^{p,\Phi}}.
\end{align}
On the other hand
\begin{equation*}
\begin{split}
\mbox{\upshape II}&=\left(\iint_{\Gamma(x)}\sup_{\varphi\in{\mathcal C}_\alpha}\bigg|\int_{(2B)^c}\big[b(z)-b_B\big]\varphi_t(y-z)f(z)\,dz\bigg|^2\frac{dydt}{t^{n+1}}\right)^{1/2}\\
&\le C\left(\iint_{\Gamma(x)}
\bigg|t^{-n}\sum_{k=1}^\infty\int_{(2^{k+1}B\backslash 2^{k}B)\cap\{z:|y-z|\le t\}}|b(z)-b_B||f(z)|\,dz\bigg|^2\frac{dydt}{t^{n+1}}\right)^{1/2}\\
&\le C\left(\iint_{\Gamma(x)}
\bigg|t^{-n}\sum_{k=1}^\infty\int_{(2^{k+1}B\backslash 2^{k}B)\cap\{z:|y-z|\le t\}}\big|b(z)-b_{2^{k+1}B}\big||f(z)|\,dz\bigg|^2\frac{dydt}{t^{n+1}}\right)^{1/2}\\
&+C\left(\iint_{\Gamma(x)}
\bigg|t^{-n}\sum_{k=1}^\infty\big|b_{2^{k+1}B}-b_B\big|\cdot\int_{(2^{k+1}B\backslash 2^{k}B)\cap\{z:|y-z|\le t\}}|f(z)|\,dz\bigg|^2\frac{dydt}{t^{n+1}}\right)^{1/2}\\
&= \mbox{\upshape III+IV}.
\end{split}
\end{equation*}
We denote the conjugate exponent of $p>1$ by $p'=p/{(p-1)}$. Then by H\"older's inequality and Theorem 3.1, we obtain
\begin{align}
&\int_{2^{k+1}B\backslash 2^{k}B}\big|b(z)-b_{2^{k+1}B}\big||f(z)|\,dz\notag\\
\le&\,\bigg(\int_{2^{k+1}B}\big|b(z)-b_{2^{k+1}B}\big|^{p'}\,dz\bigg)^{1/{p'}}
\bigg(\int_{2^{k+1}B}\big|f(z)\big|^p\,dz\bigg)^{1/p}\notag\\
\le&\,C\|b\|_*\|f\|_{L^{p,\Phi}}\cdot \big|2^{k+1}B\big|^{1/{p'}}\Phi(2^{k+1}r)^{1/p}.
\end{align}
In addition, we note that in this case, $t\ge2^{k-2}r$ as in Theorem 1.1. Then it follows from Minkowski's integral inequality and the above inequality (3.5) that
\begin{equation*}
\begin{split}
\mbox{\upshape III}&\le C\left(\int_{2^{k-2}r}^\infty\int_{|x-y|<t}\bigg|t^{-n}\sum_{k=1}^\infty\int_{2^{k+1}B\backslash 2^{k}B}\big|b(z)-b_{2^{k+1}B}\big||f(z)|\,dz\bigg|^2\frac{dydt}{t^{n+1}}\right)^{1/2}\\
&\le C\bigg(\sum_{k=1}^\infty\int_{2^{k+1}B\backslash 2^{k}B}\big|b(z)-b_{2^{k+1}B}\big||f(z)|\,dz\bigg)\bigg(\int_{2^{k-2}r}^\infty\frac{dt}{t^{2n+1}}\bigg)^{1/2}\\
&\le C\|b\|_*\|f\|_{L^{p,\Phi}}\cdot \sum_{k=1}^\infty\frac{\Phi(2^{k+1}r)^{1/p}}{|B(x_0,2^{k+1}r)|^{1/p}}.
\end{split}
\end{equation*}
Hence, it follows directly from the inequality (2.5) that
\begin{align}
\frac{1}{\Phi(r)^{1/p}}\bigg(\int_B \mbox{\upshape III}^p\,dx\bigg)^{1/p}&\le C\|b\|_*\|f\|_{L^{p,\Phi}}\sum_{k=1}^\infty\frac{|B(x_0,r)|^{1/p}}{\Phi(r)^{1/p}}
\cdot\frac{\Phi(2^{k+1}r)^{1/p}}{|B(x_0,2^{k+1}r)|^{1/p}}\notag\\
&\le C\|b\|_*\|f\|_{L^{p,\Phi}}\sum_{k=1}^\infty\left(\frac{D(\Phi)}{2^{n}}\right)^{{(k+1)}/p}\notag\\
&\le C\|b\|_*\|f\|_{L^{p,\Phi}}.
\end{align}
Now let us deal with the last term \mbox{\upshape IV}. Since $b\in BMO(\mathbb R^n)$, then a trivial calculation shows that
\begin{equation}
\big|b_{2^{k+1}B}-b_B\big|\le C\cdot(k+1)\|b\|_*.
\end{equation}
Thus, by using Minkowski's integral inequality and the inequalities (2.3) and (3.7), we have
\begin{equation*}
\begin{split}
\mbox{\upshape IV}&\le C\left(\int_{2^{k-2}r}^\infty\int_{|x-y|<t}\bigg|t^{-n}\sum_{k=1}^\infty\big|b_{2^{k+1}B}-b_B\big|
\cdot\int_{2^{k+1}B\backslash 2^{k}B}|f(z)|\,dz\bigg|^2\frac{dydt}{t^{n+1}}\right)^{1/2}\\
&\le C\|b\|_*\bigg(\sum_{k=1}^\infty(k+1)\cdot\int_{2^{k+1}B\backslash 2^{k}B}|f(z)|\,dz\bigg)\bigg(\int_{2^{k-2}r}^\infty\frac{dt}{t^{2n+1}}\bigg)^{1/2}\\
&\le C\|b\|_*\|f\|_{L^{p,\Phi}}\sum_{k=1}^\infty(k+1)\cdot\frac{\Phi(2^{k+1}r)^{1/p}}{|B(x_0,2^{k+1}r)|^{1/p}}.
\end{split}
\end{equation*}
Therefore
\begin{align}
\frac{1}{\Phi(r)^{1/p}}\bigg(\int_B \mbox{\upshape IV}^p\,dx\bigg)^{1/p}&\le C\|b\|_*\|f\|_{L^{p,\Phi}}\sum_{k=1}^\infty (k+1)\cdot\frac{|B(x_0,r)|^{1/p}}{\Phi(r)^{1/p}}
\cdot\frac{\Phi(2^{k+1}r)^{1/p}}{|B(x_0,2^{k+1}r)|^{1/p}}\notag\\
&\le C\|b\|_*\|f\|_{L^{p,\Phi}}\sum_{k=1}^\infty (k+1)\cdot\left(\frac{D(\Phi)}{2^{n}}\right)^{{(k+1)}/p}\notag\\
&\le C\|b\|_*\|f\|_{L^{p,\Phi}},
\end{align}
where we have used the inequality (2.5). Summarizing the estimates (3.6) and (3.8) derived above, we thus obtain
\begin{equation}
\frac{1}{\Phi(r)^{1/p}}\bigg(\int_B \mbox{\upshape II}^p\,dx\bigg)^{1/p}\le C\|b\|_*\|f\|_{L^{p,\Phi}}.
\end{equation}
Combining the inequalities (3.2), (3.4) with the above inequality (3.9) and then taking the supremum over all balls $B=B(x_0,r)\subseteq\mathbb R^n$, we complete the proof of Theorem 1.3.
\end{proof}

\section{Proof of Theorem 1.4}

In order to prove the main theorem of this section, we need to establish the following three lemmas. Actually, these results are essentially contained in \cite{torchinsky}. For the sake of completeness, we give its proofs here (see also \cite{wang1} for the weighted case).

\newtheorem{lemma}[theorem]{Lemma}

\begin{lemma}
Let $0<\alpha\le1$ and $p=2$. Then for any $j\in\mathbb Z_+$, we have
\begin{equation*}
\big\|\mathcal S_{\alpha,2^j}(f)\big\|_{L^2}\le C\cdot2^{{jn}/2}\big\|\mathcal S_\alpha(f)\big\|_{L^2}.
\end{equation*}
\end{lemma}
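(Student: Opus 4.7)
The plan is to exploit the fact that the integrand $A_\alpha(f)(y,t)$ in the defining expression for $\mathcal S_{\alpha,\beta}(f)$ does not depend on the base point $x$; only the domain of integration $\Gamma_\beta(x)$ does. Because the left-hand side is an unweighted $L^2$ norm over $\mathbb R^n$, Fubini--Tonelli reduces the estimate to a purely geometric volume computation on the ``tent region,'' with no harmonic analysis involved beyond the definition.

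Concretely, I would compute
\begin{equation*}
\big\|\mathcal S_{\alpha,2^j}(f)\big\|_{L^2}^2
=\int_{\mathbb R^n}\iint_{\Gamma_{2^j}(x)}\big(A_\alpha(f)(y,t)\big)^2\frac{dy\,dt}{t^{n+1}}\,dx,
\end{equation*}
and then swap the order of integration. Since $(y,t)\in\Gamma_{2^j}(x)$ is equivalent to $x\in B(y,2^j t)$, the inner $dx$-integral produces a factor $|B(y,2^jt)|=v_n\,(2^jt)^n=v_n\,2^{jn}\,t^n$. This yields
\begin{equation*}
\big\|\mathcal S_{\alpha,2^j}(f)\big\|_{L^2}^2
=v_n\,2^{jn}\iint_{\mathbb R^{n+1}_+}\big(A_\alpha(f)(y,t)\big)^2\,\frac{dy\,dt}{t}.
\end{equation*}
The same calculation with $j=0$ (aperture one) gives
\begin{equation*}
\big\|\mathcal S_\alpha(f)\big\|_{L^2}^2
=v_n\iint_{\mathbb R^{n+1}_+}\big(A_\alpha(f)(y,t)\big)^2\,\frac{dy\,dt}{t}.
\end{equation*}
Taking the ratio and extracting a square root delivers $\|\mathcal S_{\alpha,2^j}(f)\|_{L^2}=2^{jn/2}\|\mathcal S_\alpha(f)\|_{L^2}$, which is the claimed bound (with constant $C=1$).

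There is essentially no obstacle here beyond verifying that Fubini--Tonelli applies, which is immediate because the integrand is nonnegative and measurable in $(x,y,t)$; the result is in fact an identity (up to equality of constants) rather than a genuine inequality. The only point to keep in mind is that the argument is special to the case $p=2$: for other exponents the $L^p$ norm does not linearize under this Fubini swap, and a genuine aperture-change inequality such as the one of Fefferman--Stein would be needed. Since the statement here is restricted to $p=2$, the direct volume computation suffices.
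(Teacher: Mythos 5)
Your proof is correct and follows exactly the same Fubini--Tonelli argument used in the paper: swap the order of integration, observe that the inner $dx$-integral produces the volume $|B(y,2^jt)|=C\cdot 2^{jn}t^n$, and compare with the $j=0$ case. Your observation that the result is in fact an identity with $C=1$ is a nice refinement, though the paper only records the inequality.
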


\begin{proof}
For every $j\in\mathbb Z_+$, by the definition of $\mathcal S_{\alpha,2^j}$, we obtain
\begin{equation*}
\begin{split}
\big\|\mathcal S_{\alpha,2^j}(f)\big\|_{L^2}^2&=\int_{\mathbb R^n}\bigg(\iint_{{\mathbb R}^{n+1}_+}\Big(A_\alpha(f)(y,t)\Big)^2\chi_{|x-y|<2^j t}\frac{dydt}{t^{n+1}}\bigg)\,dx\\
&=\iint_{{\mathbb R}^{n+1}_+}\bigg(\int_{|x-y|<2^j t}\,dx\bigg)\Big(A_\alpha(f)(y,t)\Big)^2\frac{dydt}{t^{n+1}}\\
&\le C\cdot2^{jn}\iint_{{\mathbb R}^{n+1}_+}\bigg(\int_{|x-y|<t}\,dx\bigg)\Big(A_\alpha(f)(y,t)\Big)^2\frac{dydt}{t^{n+1}}\\
&=C\cdot 2^{jn}\big\|\mathcal S_\alpha(f)\big\|_{L^2}^2.
\end{split}
\end{equation*}
Taking square-roots on both sides of the above inequality, we are done.
\end{proof}

\begin{lemma}
Let $0<\alpha\le1$ and $2<p<\infty$. Then for any $j\in\mathbb Z_+$, we have
\begin{equation*}
\big\|\mathcal S_{\alpha,2^j}(f)\big\|_{L^p}\le C\cdot2^{{jn}/2}\big\|\mathcal S_\alpha(f)\big\|_{L^p}.
\end{equation*}
\end{lemma}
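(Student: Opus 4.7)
The plan is to pass from the $p=2$ case already handled in Lemma 4.1 to the range $2 < p < \infty$ via a duality argument coupled with the Hardy--Littlewood maximal function. Since $p/2 > 1$, the first step is to write
\begin{equation*}
\bigl\|\mathcal S_{\alpha,2^j}(f)\bigr\|_{L^p}^{2} = \bigl\|\mathcal S_{\alpha,2^j}(f)^{2}\bigr\|_{L^{p/2}} = \sup \int_{\mathbb R^n} \mathcal S_{\alpha,2^j}(f)(x)^{2}\, h(x)\, dx,
\end{equation*}
where the supremum runs over nonnegative $h$ with $\|h\|_{L^{(p/2)'}} \le 1$, thereby reducing the problem to estimating the paired integral on the right.

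Next I would unwind the definition of $\mathcal S_{\alpha,2^j}(f)$ and invoke Fubini's theorem to rewrite the paired integral as
\begin{equation*}
\iint_{\mathbb R^{n+1}_+} A_\alpha(f)(y,t)^{2}\, \Bigl(\int_{|x-y|<2^j t} h(x)\, dx\Bigr)\frac{dydt}{t^{n+1}}.
\end{equation*}
The geometric heart of the argument is the pointwise bound
\begin{equation*}
\int_{B(y,\,2^j t)} h(x)\, dx \le C\cdot 2^{jn}\int_{B(y,\,t)} Mh(x)\, dx,
\end{equation*}
where $M$ is the Hardy--Littlewood maximal operator. To verify this, note that for any $z \in B(y,t)$ and $j \ge 1$ one has $B(y,2^j t) \subseteq B(z,2^{j+1} t)$, hence $|B(y,2^j t)|^{-1}\int_{B(y,2^j t)} h \le C\cdot Mh(z)$; averaging $z$ over $B(y,t)$ gives the claim.

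Substituting this back and applying Fubini once more converts the paired integral into $C\cdot 2^{jn} \int_{\mathbb R^n} Mh(x)\, \mathcal S_\alpha(f)(x)^{2}\, dx$. H\"older's inequality with exponents $p/2$ and $(p/2)'$ followed by the boundedness of $M$ on $L^{(p/2)'}$ (available because $2<p<\infty$ forces $(p/2)' \in (1,\infty)$) then yields
\begin{equation*}
\int_{\mathbb R^n} \mathcal S_{\alpha,2^j}(f)(x)^{2}\, h(x)\, dx \le C\cdot 2^{jn}\, \|\mathcal S_\alpha(f)\|_{L^p}^{2}\, \|h\|_{L^{(p/2)'}}.
\end{equation*}
Taking the supremum over $h$ and extracting square roots closes the argument.

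The main obstacle is the geometric pointwise inequality that trades the $2^j$-fold aperture enlargement for a single factor of $2^{jn}$ together with a replacement of $h$ by $Mh$; everything else is routine Fubini/H\"older bookkeeping. This strategy intrinsically requires $p>2$ so that $(p/2)'$ is finite and the maximal operator acts continuously on $L^{(p/2)'}$, which is precisely why the $p=2$ case had to be treated separately in Lemma 4.1.
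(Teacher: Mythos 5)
Your proposal is correct and follows essentially the same route as the paper: express $\|\mathcal S_{\alpha,2^j}(f)\|_{L^p}^2$ as $\|\mathcal S_{\alpha,2^j}(f)^2\|_{L^{p/2}}$, dualize against $L^{(p/2)'}$, interchange integrals, bound $\int_{B(y,2^jt)} h$ by $C\,2^{jn}\int_{B(y,t)} Mh$, and close with H\"older and the $L^{(p/2)'}$-boundedness of $M$. The only cosmetic difference is how the geometric inequality is justified — the paper bounds the average over $B(y,2^jt)$ by $\inf_{x\in B(y,t)} M(g)(x)$ using the uncentered maximal function (yielding the factor $2^{jn}$ exactly), while you enlarge to $B(z,2^{j+1}t)$ and average over $z$, picking up a harmless extra constant — but the substance is identical.
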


\begin{proof}
For any $j\in\mathbb Z_+$, it is easy to see that
\begin{equation*}
\big\|\mathcal S_{\alpha,2^j}(f)\big\|^2_{L^p}=\big\|\mathcal S_{\alpha,2^j}(f)^2\big\|_{L^{p/2}}.
\end{equation*}
Since $p/2>1$, then by duality, we have
\begin{align}
&\big\|\mathcal S_{\alpha,2^j}(f)^2\big\|_{L^{p/2}}\notag\\
=&\underset{\|g\|_{L^{{(p/2)}'}}\le1}{\sup}\left|\int_{\mathbb R^n}\mathcal S_{\alpha,2^j}(f)(x)^2g(x)\,dx\right|\notag\\
=&\underset{\|g\|_{L^{{(p/2)}'}}\le1}{\sup}\left|\int_{\mathbb R^n}\bigg(\iint_{{\mathbb R}^{n+1}_+}\Big(A_\alpha(f)(y,t)\Big)^2\chi_{|x-y|<2^j t}\frac{dydt}{t^{n+1}}\bigg)g(x)\,dx\right|\notag\\
=&\underset{\|g\|_{L^{{(p/2)}'}}\le1}{\sup}\left|\iint_{{\mathbb R}^{n+1}_+}\bigg(\int_{|x-y|<2^jt}g(x)\,dx\bigg)\Big(A_\alpha(f)(y,t)\Big)^2 \frac{dydt}{t^{n+1}}\right|.
\end{align}
Recall that the Hardy-Littlewood maximal operator $M$ is defined by
\begin{equation*}
M(f)(x)=\underset{x\in B}{\sup}\frac{1}{|B|}\int_B|f(y)|\,dy,
\end{equation*}
where the supremum is taken over all balls $B$ which contain $x$. Then we get
\begin{align}
\int_{|x-y|<2^jt}g(x)\,dx&\le 2^{jn}|B(y,t)|\cdot\frac{1}{|B(y,2^jt)|}\int_{B(y,2^jt)}g(x)\,dx\notag\\
&\le 2^{jn}|B(y,t)|\underset{x\in B(y,t)}{\inf}M(g)(x)\notag\\
&\le 2^{jn}\int_{|x-y|<t}M(g)(x)\,dx.
\end{align}
Substituting the above inequality (4.2) into (4.1) and using H\"older's inequality together with the $L^{(p/2)'}$ boundedness of $M$, we thus obtain
\begin{equation*}
\begin{split}
\big\|\mathcal S_{\alpha,2^j}(f)^2\big\|_{L^{p/2}}&\le 2^{jn}\underset{\|g\|_{L^{{(p/2)}'}}\le1}{\sup}
\left|\int_{\mathbb R^n}\mathcal S_\alpha(f)(x)^2M(g)(x)\,dx\right|\\
&\le 2^{jn}\big\|\mathcal S_\alpha(f)^2\big\|_{L^{p/2}}\underset{\|g\|_{L^{{(p/2)}'}}\le1}
{\sup}\big\|M(g)\big\|_{L^{{(p/2)}'}}\\
&\le C\cdot2^{jn}\big\|\mathcal S_\alpha(f)^2\big\|_{L^{p/2}}\\
&= C\cdot2^{jn}\big\|\mathcal S_\alpha(f)\big\|^2_{L^p}.
\end{split}
\end{equation*}
This implies the desired result.
\end{proof}

\begin{lemma}
Let $0<\alpha\le1$ and $1\le p<2$. Then for any $j\in\mathbb Z_+$, we have
\begin{equation*}
\big\|\mathcal S_{\alpha,2^j}(f)\big\|_{L^p}\le C\cdot2^{{jn}/p}\big\|\mathcal S_\alpha(f)\big\|_{L^p}.
\end{equation*}
\end{lemma}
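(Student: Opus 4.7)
The announced constant $2^{jn/p}$ is what Marcinkiewicz interpolation predicts between the strong $L^2$ bound of Lemma 4.1 (constant $2^{jn/2}$) and an endpoint $L^1$ (or weak-type $(1,1)$) comparison of size $2^{jn}$: solving $1/p=(1-\theta)+\theta/2$ one checks $(2^{jn})^{1-\theta}(2^{jn/2})^\theta = 2^{jn/p}$. Accordingly, my plan is to establish the good-$\lambda$ comparison
\[
\bigl|\bigl\{x\in\mathbb R^n : \mathcal S_{\alpha,2^j}(f)(x) > \lambda\bigr\}\bigr| \le C\cdot 2^{jn}\bigl|\bigl\{x\in\mathbb R^n : \mathcal S_\alpha(f)(x) > c\lambda\bigr\}\bigr|
\]
with absolute constants $C,c>0$ independent of $j,\lambda,f$. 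Multiplying by $p\lambda^{p-1}$ and integrating over $\lambda\in(0,\infty)$ will immediately yield $\|\mathcal S_{\alpha,2^j}(f)\|_{L^p} \le C'\cdot 2^{jn/p}\|\mathcal S_\alpha(f)\|_{L^p}$ for every $p\in[1,\infty)$, covering in particular the range $1\le p<2$ claimed in the lemma and giving the strong $L^1$ endpoint for free.

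To prove the good-$\lambda$ inequality, let $\Omega_\lambda=\{\mathcal S_\alpha(f)>c\lambda\}$ (an open set) and enlarge it to $\Omega_\lambda^\ast = \{x\in\mathbb R^n : M(\chi_{\Omega_\lambda})(x) > c_0\cdot 2^{-jn}\}$ via the Hardy-Littlewood maximal function, where $c_0>0$ is a small absolute constant. The weak-type $(1,1)$ of $M$ gives $|\Omega_\lambda^\ast|\le C\cdot 2^{jn}|\Omega_\lambda|$, so it suffices to show $\{\mathcal S_{\alpha,2^j}(f)>\lambda\}\subseteq \Omega_\lambda^\ast$. For $x\notin \Omega_\lambda^\ast$ and any $(y,t)\in\Gamma_{2^j}(x)$, the inclusion $B(y,t)\subseteq B(x,(2^j+1)t)$ combined with the maximal density bound forces $|\Omega_\lambda^c\cap B(y,t)|\ge |B(y,t)|/2$ (upon choosing $c_0$ small enough). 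A Fubini interchange between the cone integral defining $\mathcal S_{\alpha,2^j}(f)(x)^2$ and the $z$-average over $B(y,t)\cap \Omega_\lambda^c$ then reduces the estimate to an integral of $\mathcal S_\alpha(f)(z)^2$ over good points $z\in\Omega_\lambda^c$, where this quantity is pointwise controlled by $(c\lambda)^2$.

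The main technical obstacle is a weight mismatch between the cone measure $t^{-(n+1)}\,dy\,dt$ appearing in $\mathcal S_{\alpha,2^j}(f)^2$ and the normalization $|B(y,t)|^{-1}\asymp t^{-n}$ produced by the Fubini step, which introduces an extraneous factor of $t^{-n}$ in the resulting integral and prevents a naive comparison. Reconciling this requires handling the estimate scale by scale in $t$ via a dyadic decomposition $t\in[2^k,2^{k+1})$, combined at each scale with the localized bound $\int_{\Omega_\lambda^c\cap B(x,R_k)}\mathcal S_\alpha(f)^2 \le (c\lambda)^2\,|B(x,R_k)|$ with $R_k=(2^j+1)\cdot 2^{k+1}$; the $2^{jn}$ factor emerges from the volume ratio $|B(x,R_k)|/|B(y,t)|\asymp 2^{jn}$, and the constants $c$ and $c_0$ must be tuned so that the scale-by-scale contributions assemble into a total bound of the correct order $\lambda^2$ on the complement of $\Omega_\lambda^\ast$.
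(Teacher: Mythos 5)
Your starting ingredients (the set $\Omega_\lambda^\ast$ built from $M(\chi_{\Omega_\lambda})$, the density bound $|B(y,t)\cap\Omega_\lambda|\le\tfrac12|B(y,t)|$ on $\Gamma_{2^j}(\mathbb R^n\setminus\Omega_\lambda^\ast)$, and the Fubini interchange) are exactly the paper's, but the statement you extract from them is too strong and the gap you flag in your last paragraph is fatal, not cosmetic. You want the pointwise containment
$\{\mathcal S_{\alpha,2^j}(f)>\lambda\}\subseteq\Omega_\lambda^\ast$, i.e.\ $\mathcal S_{\alpha,2^j}(f)(x)\lesssim\lambda$ for every $x\notin\Omega_\lambda^\ast$. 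The Fubini/density argument does not give that. What it gives, after inserting
$1\le 2|B(y,t)\cap\Omega_\lambda^c|/|B(y,t)|$ and interchanging, is an integral with measure $dz\,dy\,dt/t^{2n+1}$ rather than $dz\,dy\,dt/t^{n+1}$ — precisely the $t^{-n}$ mismatch you name — and the $z$-integration runs over all of $\Omega_\lambda^c$, an unbounded set on which $\mathcal S_\alpha f\le c\lambda$ does not localize anything. Your proposed scale-by-scale patch leads, after the $t\sim 2^k$ decomposition, to a sum over $k$ of averages over growing balls of partial pieces of $\mathcal S_\alpha(f)(z)^2$; these pieces are each dominated by $(c\lambda)^2$, but the sum over $k$ then diverges, and there is no mechanism forcing the partial pieces to add up coherently to a single bounded average. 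So the good-$\lambda$ inequality as you wrote it is not established, and I do not believe it is true: if it were, multiplying by $p\lambda^{p-1}$ and integrating would give $\|\mathcal S_{\alpha,2^j}(f)\|_{L^p}\le C\,2^{jn/p}\|\mathcal S_\alpha(f)\|_{L^p}$ for \emph{every} $p\in(0,\infty)$, with no role played by the hypothesis $p<2$, which is exactly where the restriction must bite.

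The correct way to close the argument — and what the paper does, following Torchinsky — is to stop one step short of a pointwise statement and instead prove the $L^2$-restricted bound
\begin{equation*}
\int_{\mathbb R^n\setminus\Omega_\lambda^\ast}\mathcal S_{\alpha,2^j}(f)(x)^2\,dx\le C\,2^{jn}\int_{\mathbb R^n\setminus\Omega_\lambda}\mathcal S_\alpha(f)(x)^2\,dx.
\end{equation*}
This follows cleanly from the same Fubini interchange because the quantity produced, $|B(y,2^jt)\cap(\mathbb R^n\setminus\Omega_\lambda^\ast)|$, is compared to $|B(y,t)\cap(\mathbb R^n\setminus\Omega_\lambda)|$ with a loss of exactly $2^{jn}$, and reversing Fubini reproduces $\mathcal S_\alpha(f)^2$ with the correct measure $dy\,dt/t^{n+1}$ — no weight mismatch. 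Then one writes $|\Omega_{\lambda,j}|\le|\Omega_\lambda^\ast|+|\Omega_{\lambda,j}\cap(\mathbb R^n\setminus\Omega_\lambda^\ast)|$, uses Chebyshev on the second piece, multiplies by $p\lambda^{p-1}$, integrates in $\lambda$, and changes the order of integration; the factor $\int_{\mathcal S_\alpha f(x)}^\infty p\lambda^{p-3}\,d\lambda=\tfrac{p}{2-p}\mathcal S_\alpha(f)(x)^{p-2}$ converges precisely because $p<2$. That is the mechanism that produces the $2^{jn/p}$ and that your route is missing.
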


\begin{proof}
We will adopt the same method as in \cite{torchinsky}. For any $j\in\mathbb Z_+$, set
$\Omega_\lambda=\big\{x\in\mathbb R^n:\mathcal S_\alpha(f)(x)>\lambda\big\}$ and $\Omega_{\lambda,j}=\big\{x\in\mathbb R^n:\mathcal S_{\alpha,2^j}(f)(x)>\lambda\big\}.$ We also set
\begin{equation*}
\Omega^*_\lambda=\Big\{x\in\mathbb R^n:M(\chi_{\Omega_\lambda})(x)>\frac{1}{2^{(jn+1)}}\Big\}.
\end{equation*}
Observe that $\big|\Omega_{\lambda,j}\big|\le \big|\Omega^*_\lambda\big|+\big|\Omega_{\lambda,j}\cap(\mathbb R^n\backslash\Omega^*_\lambda)\big|$. Thus
\begin{equation*}
\begin{split}
\big\|\mathcal S_{\alpha,2^j}(f)\big\|^p_{L^p}&=\int_0^\infty p\lambda^{p-1}\big|\Omega_{\lambda,j}\big|\,d\lambda\\
&\le\int_0^\infty p\lambda^{p-1}\big|\Omega^*_\lambda\big|\,d\lambda+\int_0^\infty p\lambda^{p-1}\big|\Omega_{\lambda,j}\cap(\mathbb R^n\backslash\Omega^*_\lambda)\big|\,d\lambda\\
&=\mbox{\upshape I+II}.
\end{split}
\end{equation*}
The weak type (1,1) estimate of $M$ yields
\begin{equation}
\mbox{\upshape I}\le C\cdot2^{jn}\int_0^\infty p\lambda^{p-1}|\Omega_\lambda|\,d\lambda=C\cdot2^{jn}\big\|\mathcal S_\alpha(f)\big\|^p_{L^p}.
\end{equation}
To estimate II, we now claim that the following inequality holds.
\begin{equation}
\int_{\mathbb R^n\backslash\Omega^*_\lambda}\mathcal S_{\alpha,2^j}(f)(x)^2\,dx\le C\cdot2^{jn}\int_{\mathbb R^n\backslash\Omega_\lambda}\mathcal S_{\alpha}(f)(x)^2\,dx.
\end{equation}
We will take the above inequality temporarily for granted, then it follows from Chebyshev's inequality and (4.4) that
\begin{equation*}
\begin{split}
\big|\Omega_{\lambda,j}\cap(\mathbb R^n\backslash\Omega^*_\lambda)\big|&\le\lambda^{-2}\int_{\Omega_{\lambda,j}\cap(\mathbb R^n\backslash\Omega^*_\lambda)}\mathcal S_{\alpha,2^j}(f)(x)^2\,dx\\
&\le\lambda^{-2}\int_{\mathbb R^n\backslash\Omega^*_\lambda}\mathcal S_{\alpha,2^j}(f)(x)^2\,dx\\
&\le C\cdot2^{jn}\lambda^{-2}\int_{\mathbb R^n\backslash\Omega_\lambda}\mathcal S_{\alpha}(f)(x)^2\,dx.
\end{split}
\end{equation*}
Hence
\begin{equation*}
\mbox{\upshape II}\le C\cdot2^{jn}\int_0^\infty p\lambda^{p-1}\bigg(\lambda^{-2}\int_{\mathbb R^n\backslash\Omega_\lambda}\mathcal S_{\alpha}(f)(x)^2\,dx\bigg)d\lambda.
\end{equation*}
Changing the order of integration yields
\begin{align}
\mbox{\upshape II}&\le C\cdot2^{jn}\int_{\mathbb R^n}\mathcal S_\alpha(f)(x)^2\bigg(\int_{|S_\alpha(f)(x)|}^\infty p\lambda^{p-3}\,d\lambda\bigg)\,dx\notag\\
&\le C\cdot2^{jn}\frac{p}{2-p}\cdot\big\|\mathcal S_\alpha(f)\big\|^p_{L^p}.
\end{align}
Combining the above estimate (4.5) with (4.3) and taking $p$-th root on both sides, we complete the proof of Lemma 4.3. So it remains to prove the inequality (4.4). Set $\Gamma_{2^j}(\mathbb R^n\backslash\Omega^*_\lambda)=\underset{x\in\mathbb R^n\backslash\Omega^*_\lambda}{\bigcup}\Gamma_{2^j}(x)$ and
$\Gamma(\mathbb R^n\backslash\Omega_\lambda)=\underset{x\in\mathbb R^n\backslash\Omega_\lambda}{\bigcup}\Gamma(x).$
For each given $(y,t)\in\Gamma_{2^j}(\mathbb R^n\backslash\Omega^*_\lambda)$, we have
\begin{equation*}
\big|B(y,2^jt)\cap(\mathbb R^n\backslash\Omega^*_\lambda)\big|\le 2^{jn}\big|B(y,t)\big|.
\end{equation*}
It is not difficult to check that $\big|B(y,t)\cap\Omega_\lambda\big|\le\frac{|B(y,t)|}{2}$ and $\Gamma_{2^j}(\mathbb R^n\backslash\Omega^*_\lambda)\subseteq\Gamma(\mathbb R^n\backslash\Omega_\lambda)$. In fact, for any $(y,t)\in\Gamma_{2^j}(\mathbb R^n\backslash\Omega^*_\lambda)$, there exists a point $x\in \mathbb R^n\backslash\Omega^*_\lambda$ such that $(y,t)\in\Gamma_{2^j}(x)$. Then we can deduce
\begin{equation*}
\begin{split}
\big|B(y,t)\cap\Omega_\lambda\big|&\le \big|B(y,2^jt)\cap\Omega_\lambda\big|\\
&= \int_{B(y,2^jt)}\chi_{\Omega_\lambda}(z)\,dz\\
&\le 2^{jn}|B(y,t)|\cdot\frac{1}{|B(y,2^jt)|}\int_{B(y,2^jt)}\chi_{\Omega_\lambda}(z)\,dz.
\end{split}
\end{equation*}
Note that $x\in B(y,2^jt)\cap(\mathbb R^n\backslash\Omega^*_\lambda)$. So we have
\begin{equation*}
\begin{split}
\big|B(y,t)\cap\Omega_\lambda\big|\le 2^{jn}|B(y,t)|\cdot M(\chi_{\Omega_\lambda})(x)\le \frac{|B(y,t)|}{2}.
\end{split}
\end{equation*}
Consequently
\begin{equation*}
\begin{split}
\big|B(y,t)\big|&=\big|B(y,t)\cap\Omega_\lambda\big|+\big|B(y,t)\cap(\mathbb R^n\backslash\Omega_\lambda)\big|\\
&\le \frac{|B(y,t)|}{2}+\big|B(y,t)\cap(\mathbb R^n\backslash\Omega_\lambda)\big|,
\end{split}
\end{equation*}
which is equivalent to
\begin{equation*}
\big|B(y,t)\big|\le 2\cdot \big|B(y,t)\cap(\mathbb R^n\backslash\Omega_\lambda)\big|.
\end{equation*}
The above inequality implies in particular that there is a point $z\in B(y,t)\cap(\mathbb R^n\backslash\Omega_\lambda)\neq\emptyset$. In this case, we have $(y,t)\in\Gamma(z)$ with $z\in \mathbb R^n\backslash\Omega_\lambda$, which gives $\Gamma_{2^j}(\mathbb R^n\backslash\Omega^*_\lambda)\subseteq\Gamma(\mathbb R^n\backslash\Omega_\lambda)$. Thus we obtain
\begin{equation*}
\big|B(y,2^jt)\cap(\mathbb R^n\backslash\Omega_\lambda^*)\big|\le C\cdot2^{jn}\big|B(y,t)\cap(\mathbb R^n\backslash\Omega_\lambda)\big|.
\end{equation*}
Therefore
\begin{equation*}
\begin{split}
&\int_{\mathbb R^n\backslash\Omega^*_\lambda}\mathcal S_{\alpha,2^j}(f)(x)^2\,dx\\
=&\int_{\mathbb R^n\backslash\Omega^*_\lambda}\bigg(\iint_{\Gamma_{2^j}(x)}\Big(A_\alpha(f)(y,t)\Big)^2\frac{dydt}{t^{n+1}}
\bigg)\,dx\\
\le&\iint_{\Gamma_{2^j}(\mathbb R^n\backslash\Omega^*_\lambda)}\bigg(\int_{B(y,2^jt)\cap(\mathbb R^n\backslash\Omega_\lambda^*)}\,dx\bigg)\Big(A_\alpha(f)(y,t)\Big)^2\frac{dydt}{t^{n+1}}\\
\le&\,C\cdot2^{jn}\iint_{\Gamma(\mathbb R^n\backslash\Omega_\lambda)}\bigg(\int_{B(y,t)\cap(\mathbb R^n\backslash\Omega_{\lambda})}\,dx\bigg)\Big(A_\alpha(f)(y,t)\Big)^2\frac{dydt}{t^{n+1}}\\
\le&\,C\cdot2^{jn}\int_{\mathbb R^n\backslash\Omega_\lambda}\mathcal S_{\alpha}(f)(x)^2\,dx.
\end{split}
\end{equation*}
This finishes the proof of the Lemma 4.3.
\end{proof}

We are now in a position to give the proof of Theorem 1.4.

\begin{proof}[Proof of Theorem 1.4]
From the definition of $g^*_{\lambda,\alpha}$, we readily see that
\begin{align}
g^*_{\lambda,\alpha}(f)(x)^2=&\iint_{\mathbb R^{n+1}_+}\left(\frac{t}{t+|x-y|}\right)^{\lambda n}\Big(A_\alpha(f)(y,t)\Big)^2\frac{dydt}{t^{n+1}}\notag\\
=&\int_0^\infty\int_{|x-y|<t}\left(\frac{t}{t+|x-y|}\right)^{\lambda n}\Big(A_\alpha(f)(y,t)\Big)^2\frac{dydt}{t^{n+1}}\notag\\
&+\sum_{j=1}^\infty\int_0^\infty\int_{2^{j-1}t\le|x-y|<2^jt}\left(\frac{t}{t+|x-y|}\right)^{\lambda n}\Big(A_\alpha(f)(y,t)\Big)^2\frac{dydt}{t^{n+1}}\notag\\
\le&\, C\bigg[\mathcal S_\alpha(f)(x)^2+\sum_{j=1}^\infty 2^{-j\lambda n}\mathcal S_{\alpha,2^j}(f)(x)^2\bigg].
\end{align}
Let $f\in L^{p,\Phi}$ with $1<p<\infty$. For any given ball $B=B(x_0,r)\subseteq\mathbb R^n$, then from the above inequality (4.6), it follows that
\begin{equation*}
\begin{split}
&\frac{1}{\Phi(r)^{1/p}}\bigg(\int_B\big|g^*_{\lambda,\alpha}(f)(x)\big|^p\,dx\bigg)^{1/p}\\
\end{split}
\end{equation*}
\begin{equation*}
\begin{split}
\le\,&\frac{1}{\Phi(r)^{1/p}}\bigg(\int_B\big|\mathcal S_\alpha(f)(x)\big|^p\,dx\bigg)^{1/p}
+\sum_{j=1}^\infty 2^{-j\lambda n/2}\cdot
\frac{1}{\Phi(r)^{1/p}}\bigg(\int_B\big|\mathcal S_{\alpha,2^j}(f)(x)\big|^p\,dx\bigg)^{1/p}\\
=\,&I_0+\sum_{j=1}^\infty 2^{-j\lambda n/2}I_j.
\end{split}
\end{equation*}
By Theorem 1.1, we know that $I_0\le C\|f\|_{L^{p,\Phi}}$. Below we shall give the estimates of $I_j$ for $j=1,2,\ldots.$ As before, we set $f=f_1+f_2$, $f_1=f\chi_{_{2B}}$ and write
\begin{equation*}
\begin{split}
I_j&\le \frac{1}{\Phi(r)^{1/p}}\bigg(\int_{B(x_0,r)}\big|\mathcal S_{\alpha,2^j}(f_1)(x)\big|^p\,dx\bigg)^{1/p}+
\frac{1}{\Phi(r)^{1/p}}\bigg(\int_{B(x_0,r)}\big|\mathcal S_{\alpha,2^j}(f_2)(x)\big|^p\,dx\bigg)^{1/p}\\
&=I^{(1)}_j+I^{(2)}_j.
\end{split}
\end{equation*}
Applying Lemmas 4.1--4.3, Theorem A and the condition (1.1), we obtain
\begin{equation*}
\begin{split}
I^{(1)}_j&\le \frac{1}{\Phi(r)^{1/p}}\big\|\mathcal S_{\alpha,2^j}(f_1)\big\|_{L^p}\\
&\le C\Big(2^{{jn}/2}+2^{{jn}/p}\Big)\frac{1}{\Phi(r)^{1/p}}\cdot\big\|\mathcal S_\alpha(f_1)\big\|_{L^p}\\
&\le C\Big(2^{{jn}/2}+2^{{jn}/p}\Big)\frac{1}{\Phi(r)^{1/p}}\cdot\|f_1\|_{L^p}\\
&\le C\|f\|_{L^{p,\Phi}}\Big(2^{{jn}/2}+2^{{jn}/p}\Big)\cdot\frac{\Phi(2r)^{1/p}}{\Phi(r)^{1/p}}\\
&\le C\|f\|_{L^{p,\Phi}}\Big(2^{{jn}/2}+2^{{jn}/p}\Big).
\end{split}
\end{equation*}
We now turn to estimate the other term $I^{(2)}_j$. For any $x\in B$, $(y,t)\in\Gamma_{2^j}(x)$ and $z\in\big(2^{k+1}B\backslash 2^{k}B\big)\cap B(y,t)$, then by a direct calculation, we can easily deduce
\begin{equation*}
t+2^j t\ge |x-y|+|y-z|\ge|x-z|\ge|z-x_0|-|x-x_0|\ge 2^{k-1}r.
\end{equation*}
Thus, it follows from the previous estimates (2.1), (2.3) and Minkowski's integral inequality that
\begin{equation*}
\begin{split}
\big|\mathcal S_{\alpha,2^j}(f_2)(x)\big|&=\left(\iint_{\Gamma_{2^j}(x)}\sup_{\varphi\in{\mathcal C}_\alpha}|f_2*\varphi_t(y)|^2\frac{dydt}{t^{n+1}}\right)^{1/2}\\
&\le C\left(\int_{2^{(k-2-j)}r}^\infty\int_{|x-y|<2^jt}\bigg|t^{-n}\sum_{k=1}^\infty\int_{2^{k+1}B\backslash 2^{k}B}|f(z)|\,dz\bigg|^2\frac{dydt}{t^{n+1}}\right)^{1/2}\\
&\le C\bigg(\sum_{k=1}^\infty\int_{2^{k+1}B\backslash 2^{k}B}|f(z)|\,dz\bigg)\bigg(\int_{2^{(k-2-j)}r}^\infty 2^{jn}\frac{dt}{t^{2n+1}}\bigg)^{1/2}\\
&\le C\cdot2^{{3jn}/2}\sum_{k=1}^\infty\frac{1}{|B(x_0,2^{k+1}r)|}\int_{2^{k+1}B\backslash 2^{k}B}|f(z)|\,dz\\
&\le C\|f\|_{L^{p,\Phi}}\cdot 2^{{3jn}/2}\sum_{k=1}^\infty\frac{\Phi(2^{k+1}r)^{1/p}}{|B(x_0,2^{k+1}r)|^{1/p}}.
\end{split}
\end{equation*}
Furthermore, by using the inequality (2.5) again, we have
\begin{equation*}
\begin{split}
I^{(2)}_j&\le C\|f\|_{L^{p,\Phi}}\cdot2^{{3jn}/2}\sum_{k=1}^\infty\frac{|B(x_0,r)|^{1/p}}{\Phi(r)^{1/p}}
\cdot\frac{\Phi(2^{k+1}r)^{1/p}}{|B(x_0,2^{k+1}r)|^{1/p}}\\
&\le C\|f\|_{L^{p,\Phi}}\cdot2^{{3jn}/2}.
\end{split}
\end{equation*}
Therefore
\begin{equation*}
\begin{split}
&\frac{1}{\Phi(r)^{1/p}}\bigg(\int_{B(x_0,r)}\big|g^*_{\lambda,\alpha}(f)(x)\big|^p\,dx\bigg)^{1/p}\\
\le&\, C\|f\|_{L^{p,\Phi}}
\left(1+\sum_{j=1}^\infty 2^{-j\lambda n/2}2^{{3jn}/2}+\sum_{j=1}^\infty 2^{-j\lambda n/2}2^{jn/p}\right)\\
\le&\,  C\|f\|_{L^{p,\Phi}},
\end{split}
\end{equation*}
where the last two series are both convergent under our assumption $\lambda>3>2/p$ and $p>1$. Hence, by taking the supremum over all balls $B=B(x_0,r)\subseteq\mathbb R^n$, we conclude the proof of Theorem 1.4.
\end{proof}

\section{Proof of Theorem 1.5}

Let us first prove the following result.

\begin{theorem}
Let $0<\alpha\le1$ and $\lambda>{(3n+2\alpha)}/n$. Then for any $\sigma>0$, there exists a constant $C>0$ independent of $f$ and $\sigma$ such that
\begin{equation*}
\big|\big\{x\in\mathbb R^n: g^*_{\lambda,\alpha}(f)(x)>\sigma\big\}\big|\le \frac{C}{\sigma}\int_{\mathbb R^n}|f(x)|\,dx.
\end{equation*}
\end{theorem}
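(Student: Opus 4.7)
My plan is to establish the weak-type $(1,1)$ bound via a Calder\'on--Zygmund decomposition of $f$ at height $\sigma$, combined with the pointwise square estimate (4.6) from the proof of Theorem 1.4 to handle the good part via $L^2$. Write $f = g + b$ with $\|g\|_{L^\infty}\le C\sigma$, $\|g\|_{L^1}\le\|f\|_{L^1}$, and $b=\sum_k b_k$, where the $b_k$ are supported on pairwise disjoint cubes $Q_k$ (with centers $x_k$ and radii $r_k$), satisfy $\int b_k=0$, $\|b_k\|_{L^1}\le C\sigma|Q_k|$, and $\sum_k|Q_k|\le C\sigma^{-1}\|f\|_{L^1}$. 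It then suffices to bound each of $|\{g^*_{\lambda,\alpha}(g)>\sigma/2\}|$ and $|\{g^*_{\lambda,\alpha}(b)>\sigma/2\}|$ by $C\sigma^{-1}\|f\|_{L^1}$.

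For the good part I would apply Chebyshev's inequality together with the $L^2$-boundedness of $g^*_{\lambda,\alpha}$, which follows immediately from (4.6), Theorem A, and Lemma 4.1:
\begin{equation*}
\|g^*_{\lambda,\alpha}(g)\|_{L^2}^2 \le C\|g\|_{L^2}^2\Bigl(1+\sum_{j=1}^\infty 2^{-j\lambda n}\cdot 2^{jn}\Bigr) \le C\|g\|_{L^2}^2,
\end{equation*}
the series converging since $\lambda>3>1$. Because $\|g\|_{L^2}^2\le\|g\|_{L^\infty}\|g\|_{L^1}\le C\sigma\|f\|_{L^1}$, the contribution of the good part is at most $C\sigma^{-1}\|f\|_{L^1}$.

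For the bad part, let $B_k^*=B(x_k,2\sqrt{n}\,r_k)$ and $\Omega^*=\bigcup_k B_k^*$, so that $|\Omega^*|\le C\sigma^{-1}\|f\|_{L^1}$. After Chebyshev on $\mathbb{R}^n\setminus\Omega^*$, it suffices to prove the cube-by-cube bound $\int_{\mathbb{R}^n\setminus B_k^*} g^*_{\lambda,\alpha}(b_k)(x)\,dx\le C\|b_k\|_{L^1}$ and to sum over $k$. For $x\notin B_k^*$, I would split the $(y,t)$-integral defining $g^*_{\lambda,\alpha}(b_k)(x)^2$ into $\{t\ge r_k\}$ and $\{t<r_k\}$: on the first region the vanishing moment $\int b_k=0$ together with the $\alpha$-H\"older continuity of any $\varphi\in\mathcal{C}_\alpha$ yields
\begin{equation*}
A_\alpha(b_k)(y,t)\le C\,\frac{r_k^\alpha}{t^{n+\alpha}}\|b_k\|_{L^1}\qquad\text{whenever}\ B(y,t)\cap Q_k\neq\emptyset,
\end{equation*}
while on the second region I would use the trivial bound $A_\alpha(b_k)(y,t)\le Ct^{-n}\|b_k\|_{L^1}$ on the same support set. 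Further subdividing each region according to whether $t$ is small or large compared with $|x-x_k|$, and exploiting the decay of the weight $(t/(t+|x-y|))^{\lambda n}$, produces a pointwise estimate of the rough form
\begin{equation*}
g^*_{\lambda,\alpha}(b_k)(x) \le C\|b_k\|_{L^1}\biggl[\frac{r_k^\alpha}{|x-x_k|^{n+\alpha}} + \frac{r_k^{\lambda n/2 - n}}{|x-x_k|^{\lambda n/2}}\biggr];
\end{equation*}
integration over $\{|x-x_k|>cr_k\}$ then yields $C\|b_k\|_{L^1}$.

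The main obstacle is precisely the pointwise estimate in the previous paragraph: the condition $\lambda>(3n+2\alpha)/n$ emerges from carefully tracking the convergence of all the resulting $t$- and $x$-integrals -- the small-$t$ regime forces $\lambda n-3n>0$ (this is where the factor $r_k^{\lambda n/2-n}$ appears), while the large-$t$ regime requires compatibility between the $\alpha$-H\"older cancellation $r_k^{\alpha}t^{-n-\alpha}$ and the angular decay $(t/(t+|x-y|))^{\lambda n}$, contributing the extra $2\alpha/n$ to the threshold. Once this pointwise bound is in hand, summing over $k$ and invoking $\sum_k\|b_k\|_{L^1}\le C\|f\|_{L^1}$ completes the argument.
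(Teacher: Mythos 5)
Your overall plan matches the paper's: Calder\'on--Zygmund decomposition of $f$ at height $\sigma$, the good part controlled by Chebyshev and an $L^2$ bound coming from (4.6) together with Theorem~A and Lemma~4.1, and the bad part handled outside the dilated cubes using the vanishing moment of $b_k$ and the $\alpha$-H\"older condition on $\varphi\in\mathcal C_\alpha$. Your proof is correct under the stated hypothesis $\lambda>(3n+2\alpha)/n$. (Two very minor stylistic differences: the paper first peels off $\mathcal S_\alpha(f)$ via (4.6) and handles it with Theorem~B before running the decomposition on the dyadic tail; and the paper measures the ``good'' $L^2$ contribution through $\sum_j 2^{-j\lambda n/2}\mathcal S_{\alpha,2^j}$ with Minkowski, whereas you integrate (4.6) directly. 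Both are equivalent.)

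Where you genuinely diverge is in the bad part. The paper does \emph{not} work directly with $g^*_{\lambda,\alpha}(b_i)$; it reduces via (4.6) to the varying-aperture square functions $\mathcal S_{\alpha,2^j}$ and uses the cancellation bound (5.1) for \emph{all} $t\ge |x-c_i|/2^{j+2}$, obtaining $|\mathcal S_{\alpha,2^j}(b_i)(x)|\le C\,2^{j(3n+2\alpha)/2}\,|Q_i|^{\alpha/n}|x-c_i|^{-(n+\alpha)}\|b_i\|_{L^1}$, and the threshold $(3n+2\alpha)/n$ is exactly what makes $\sum_j 2^{-j\lambda n/2}2^{j(3n+2\alpha)/2}$ converge. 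You instead work directly with the kernel of $g^*_{\lambda,\alpha}$, split $t\gtrless r_k$, and use the cancellation bound only for $t\ge r_k$ and the trivial bound $A_\alpha(b_k)(y,t)\le Ct^{-n}\|b_k\|_{L^1}$ for $t<r_k$. This is a cleaner and in fact sharper route.

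However, your closing explanation of where the threshold comes from is not right. If you carry out your own four-region case analysis honestly: the small-$t$ region (trivial bound plus angular decay) requires only $\lambda>3$ and produces $r_k^{\lambda n/2-n}|x-x_k|^{-\lambda n/2}$; the large-$t$ region with the $r_k^\alpha t^{-n-\alpha}$ cancellation and the angular decay needs only $\lambda>2$ to integrate and yields either $r_k^\alpha|x-x_k|^{-(n+\alpha)}$ (when $\lambda>2+2\alpha/n$) or again $r_k^{\lambda n/2-n}|x-x_k|^{-\lambda n/2}$ (when $2<\lambda<2+2\alpha/n$). In both cases the $x$-integral over $\{|x-x_k|>cr_k\}$ is $O(1)$ once $\lambda>2$. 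So the large-$t$ regime does \emph{not} ``contribute the extra $2\alpha/n$''; your direct kernel argument already closes under $\lambda>3$. The extra $2\alpha/n$ in the paper's hypothesis is an artefact of its reduction to $\mathcal S_{\alpha,2^j}$, where the cancellation estimate is applied uniformly in $t$ and the aperture factor $2^{j(n+\alpha)}$ from the resulting $t$-integral combines with the Lemma~4.1 factor $2^{jn/2}$ to give $2^{j(3n+2\alpha)/2}$. This does not affect the validity of your proof (the theorem's hypothesis is stronger than you need), but you should not present $\lambda>(3n+2\alpha)/n$ as the condition your argument forces.

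One small technical point worth saying explicitly: the passage from $g^*_{\lambda,\alpha}(b)\le\sum_k g^*_{\lambda,\alpha}(b_k)$ uses the sublinearity of $g^*_{\lambda,\alpha}$, which follows from the subadditivity of $A_\alpha$ together with Minkowski's inequality in $L^2$ of $\mathbb R^{n+1}_+$ with the measure $\left(\tfrac{t}{t+|x-y|}\right)^{\lambda n}\tfrac{dydt}{t^{n+1}}$; this should be stated before summing the cube-by-cube estimates.
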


\begin{proof}
For any given $\sigma>0$ and $f\in L^1(\mathbb R^n)$, we apply the Calder\'on-Zygmund decomposition of $f$ at level $\sigma$ to obtain a sequence of disjoint non-overlapping dyadic cubes $\{Q_i\}$ and two functions $g$, $b$ such that the following properties hold: (see \cite{stein})

$(i)$ $f(x)=g(x)+b(x)$;

$(ii)$ $\|g\|^2_{L^2}\le C\cdot\sigma\|f\|_{L^1}$;

$(iii)$ $b(x)=0$, \quad a.e. $x\in\mathbb R^n\backslash\bigcup_i Q_i$;

$(iv)$  $\int_{Q_i}b(x)\,dx=0$;

$(v)$  $\sum_i|Q_i|\le \sigma^{-1}\|f\|_{L^1}$.

By the previous inequality (4.6), we write
\begin{equation*}
\begin{split}
&\big|\big\{x\in \mathbb R^n:|g^*_{\lambda,\alpha}(f)(x)|>\sigma\big\}\big|\\
\le&\,\big|\big\{x\in \mathbb R^n:|\mathcal S_{\alpha}(f)(x)|>\sigma/2\big\}\big|+\Big|\Big\{x\in \mathbb R^n:\Big|\sum_{j=1}^\infty 2^{-j\lambda n/2}\mathcal S_{\alpha,2^j}(f)(x)\Big|>\sigma/2\Big\}\Big|\\
=&\,\mbox{\upshape I+II}.
\end{split}
\end{equation*}
Using Theorem B, we have
\begin{equation*}
\mbox{I}\le\frac{C}{\sigma}\int_{\mathbb R^n}|f(x)|\,dx.
\end{equation*}
Now for $j=1,2,\ldots$, since $\mathcal S_{\alpha,2^j}(f)(x)\le \mathcal S_{\alpha,2^j}(g)(x)+\mathcal S_{\alpha,2^j}(b)(x)$ by the property $(i)$, then it follows that
\begin{equation*}
\begin{split}
\mbox{II}\le&\, \Big|\Big\{x\in \mathbb R^n:\Big|\sum_{j=1}^\infty 2^{-j\lambda n/2}\mathcal S_{\alpha,2^j}(g)(x)\Big|>\sigma/4\Big\}\Big|\\
&+\Big|\Big\{x\in \mathbb R^n:\Big|\sum_{j=1}^\infty 2^{-j\lambda n/2}\mathcal S_{\alpha,2^j}(b)(x)\Big|>\sigma/4\Big\}\Big|\\
=&\,\mbox{\upshape III+IV}.
\end{split}
\end{equation*}
Applying Minkowski's inequality, Lemma 4.1, Theorem A and the property $(ii)$, we can deduce
\begin{equation*}
\begin{split}
\mbox{\upshape III}&\le\frac{C}{\sigma^2}\bigg\|\sum_{j=1}^\infty 2^{-j\lambda n/2}\mathcal S_{\alpha,2^j}(g)\bigg\|^2_{L^2}\\
&\le\frac{C}{\sigma^2}\bigg(\sum_{j=1}^\infty 2^{-j\lambda n/2}\cdot 2^{jn/2}\|g\|_{L^2}\bigg)^2\\
&\le\frac{C}{\sigma^2}\cdot\|g\|^2_{L^2}\\
&\le\frac{C}{\sigma}\cdot\|f\|_{L^1}.
\end{split}
\end{equation*}
To estimate IV, let $Q_i^*=2\sqrt n Q_i$ be a cube whose center is the same as $Q_i$ and side is $2\sqrt n$ times that of $Q_i$. Then we can further decompose IV as follows.
\begin{equation*}
\begin{split}
\mbox{\upshape IV}\le&\,\Big|\Big\{x\in \bigcup_i Q_i^*:\Big|\sum_{j=1}^\infty 2^{-j\lambda n/2}\mathcal S_{\alpha,2^j}(b)(x)\Big|>\sigma/4\Big\}\Big|\\
&+\Big|\Big\{x\notin \bigcup_i Q_i^*:\Big|\sum_{j=1}^\infty 2^{-j\lambda n/2}\mathcal S_{\alpha,2^j}(b)(x)\Big|>\sigma/4\Big\}\Big|\\
=&\,\mbox{\upshape IV}^{(1)}+\mbox{\upshape IV}^{(2)}.
\end{split}
\end{equation*}
It follows immediately from the property $(v)$ that
\begin{equation*}
\mbox{\upshape IV}^{(1)}\le\sum_i\big|Q_i^*\big|\le C\sum_i|Q_i|\le\frac{C}{\sigma}\cdot\|f\|_{L^1}.
\end{equation*}
We set
\begin{equation*}
b_i(x)=
\begin{cases}
b(x) &  \mbox{if}\;\; x\in Q_i,\\
0    &  \mbox{if}\;\; x\notin Q_i.
\end{cases}
\end{equation*}
Then by the properties $(iii)$ and $(iv)$, we have $b(x)=\sum_i b_i(x)$, $supp\,b_i\subseteq Q_i$, $\int_{Q_i}b_i(x)\,dx=0$ and $\|b_i\|_{L^1}\le 2\int_{Q_i}|f(x)|\,dx$.
For any $\varphi\in{\mathcal C}_\alpha$, $0<\alpha\le1$, by the vanishing moment condition of $b_i$, we have that for any $(y,t)\in\Gamma_{2^j}(x)$,
\begin{align}
\big|(b_i*\varphi_t)(y)\big|&=\left|\int_{Q_i}\big(\varphi_t(y-z)-\varphi_t(y-x_0)\big)b_i(z)\,dz\right|\notag\\
&\le\int_{Q_i\cap\{z:|z-y|\le t\}}\frac{|z-x_0|^\alpha}{t^{n+\alpha}}|b_i(z)|\,dz\notag\\
&\le C\cdot\frac{|Q_i|^{\alpha/n}}{t^{n+\alpha}}\int_{Q_i\cap\{z:|z-y|\le t\}}|b_i(z)|\,dz.
\end{align}
Denote the center of $Q_i$ by $c_i$. Then for any $z\in Q_i$ and $x\in (Q^*_i)^c$, we have $|z-c_i|<\frac{|x-c_i|}{2}$. Thus, for all $(y,t)\in\Gamma_{2^j}(x)$ and $|z-y|\le t$ with $z\in Q_i$, we can deduce that
\begin{equation}
t+2^jt\ge|x-y|+|y-z|\ge|x-z|\ge|x-c_i|-|z-c_i|\ge\frac{|x-c_i|}{2}.
\end{equation}
Therefor, for any $x\in (Q^*_i)^c$, by using the above inequalities (5.1) and (5.2), we obtain
\begin{equation*}
\begin{split}
\big|\mathcal S_{\alpha,2^j}(b_i)(x)\big|&=\left(\iint_{\Gamma_{2^j}(x)}\Big(\sup_{\varphi\in{\mathcal C}_\alpha}\big|(\varphi_t*{b_i})(y)\big|\Big)^2\frac{dydt}{t^{n+1}}\right)^{1/2}\\
&\le C\cdot\big|Q_i\big|^{\alpha/n}\bigg(\int_{Q_i}|b_i(z)|\,dz\bigg)\left(\int_{\frac{|x-c_i|}{2^{j+2}}}^\infty
\int_{|y-x|<2^jt}\frac{dydt}{t^{2(n+\alpha)+n+1}}\right)^{1/2}\\
&\le C\cdot2^{{jn}/2}\big|Q_i\big|^{\alpha/n}\bigg(\int_{Q_i}|b_i(z)|\,dz\bigg)
\left(\int_{\frac{|x-c_i|}{2^{j+2}}}^\infty\frac{dt}{t^{2(n+\alpha)+1}}\right)^{1/2}\\
&\le C\cdot2^{j(3n+2\alpha)/2}\frac{|Q_i|^{\alpha/n}}{|x-c_i|^{n+\alpha}}\bigg(\int_{Q_i}|b_i(z)|\,dz\bigg).
\end{split}
\end{equation*}
Hence, by our hypothesis $\lambda>{(3n+2\alpha)}/n$, we have
\begin{equation*}
\begin{split}
\mbox{\upshape IV}^{(2)}&\le\frac{4}{\sigma}\int_{\mathbb R^n\backslash\bigcup_i Q_i^*}\Big|\sum_{j=1}^\infty 2^{-j\lambda n/2}\mathcal S_{\alpha,2^j}(b)(x)\Big|dx\\
&\le\frac{4}{\sigma}\sum_{j=1}^\infty 2^{-j\lambda n/2}\sum_i\left(\int_{(Q_i^*)^c}\mathcal S_{\alpha,2^j}(b_i)(x)dx\right)\\
&\le\frac{C}{\sigma}\left(\sum_{j=1}^\infty 2^{-j\lambda n/2}\cdot2^{j(3n+2\alpha)/2}\right)
\left(\sum_i|Q_i|^{\alpha/n}\|b_i\|_{L^1}\int_{(Q_i^*)^c}\frac{dx}{|x-c_i|^{n+\alpha}}\right)\\
&\le\frac{C}{\sigma}\cdot\sum_i\int_{Q_i}|f(z)|\,dz\\
&\le\frac{C}{\sigma}\cdot\|f\|_{L^1}.
\end{split}
\end{equation*}
Summing up the above estimates, we finish the proof of Theorem 5.1.
\end{proof}

We are now ready to prove Theorem 1.5.

\begin{proof}[Proof of Theorem 1.5]
Let $f\in L^{1,\Phi}$. For each fixed ball $B=B(x_0,r)\subseteq\mathbb R^n$, we again decompose $f$ as $f=f_1+f_2$, where $f_1=f\chi_{_{2B}}$. For any given $\sigma>0$, then we write
\begin{equation*}
\begin{split}
&\big|\big\{x\in B(x_0,r):|g^*_{\lambda,\alpha}(f)(x)|>\sigma\big\}\big|\\
\le\,& \big|\big\{x\in B(x_0,r):|g^*_{\lambda,\alpha}(f_1)(x)|>\sigma/2\big\}\big|+\big|\big\{x\in B(x_0,r):|g^*_{\lambda,\alpha}(f_2)(x)|>\sigma/2\big\}\big|\\
  =\,&J'_1+J'_2.
\end{split}
\end{equation*}
Theorem 5.1 and the condition (1.1) imply
\begin{equation*}
\begin{split}
J'_1&\le\frac{C}{\sigma}\int_{2B}|f(y)|\,dy\\
&\le\frac{C\cdot\Phi(2r)}{\sigma}\|f\|_{L^{1,\Phi}}\\
&\le\frac{C\cdot\Phi(r)}{\sigma}\|f\|_{L^{1,\Phi}}.
\end{split}
\end{equation*}
For the term $J'_2$, note that in the proofs of Theorems 1.2 and 1.4, we have already showed that for any $x\in B(x_0,r)$,
\begin{equation}
\big|\mathcal S_{\alpha}(f_2)(x)\big|\le C\|f\|_{L^{1,\Phi}}\cdot\frac{\Phi(r)}{|B(x_0,r)|}.
\end{equation}
and
\begin{equation*}
\begin{split}
\big|\mathcal S_{\alpha,2^j}(f_2)(x)\big|\le C\cdot2^{{3jn}/2}\sum_{k=1}^\infty\frac{1}{|B(x_0,2^{k+1}r)|}\int_{2^{k+1}B\backslash 2^{k}B}|f(z)|\,dz.
\end{split}
\end{equation*}
Moreover, it follows directly from the inequality (2.6) that
\begin{align}
\big|\mathcal S_{\alpha,2^j}(f_2)(x)\big|&\le C\|f\|_{L^{1,\Phi}}\cdot2^{{3jn}/2}\frac{\Phi(r)}{|B(x_0,r)|}
\sum_{k=1}^\infty\frac{|B(x_0,r)|}{\Phi(r)}\cdot\frac{\Phi(2^{k+1}r)}{|B(x_0,2^{k+1}r)|}\notag\\
&\le C\|f\|_{L^{1,\Phi}}\cdot2^{{3jn}/2}\frac{\Phi(r)}{|B(x_0,r)|}.
\end{align}
Therefore, by using the estimates (4.6), (5.3) and (5.4), we get
\begin{equation*}
\begin{split}
\big|g^*_{\lambda,\alpha}(f_2)(x)\big|&\le C\left(\big|\mathcal S_\alpha(f_2)(x)\big|+\sum_{j=1}^\infty 2^{{-j\lambda n}/2}\big|\mathcal S_{\alpha,2^j}(f_2)(x)\big|\right)\\
&\le C\|f\|_{L^{1,\Phi}}\cdot\frac{\Phi(r)}{|B(x_0,r)|}\left(1+\sum_{j=1}^\infty2^{{-j\lambda n}/2}\cdot2^{{3jn}/2}\right)\\
&\le C\|f\|_{L^{1,\Phi}}\cdot\frac{\Phi(r)}{|B(x_0,r)|},
\end{split}
\end{equation*}
where the last series is convergent since $\lambda>{(3n+2\alpha)}/n>3$. The rest of the proof is exactly the same as that of Theorem 1.2, and we finally obtain
\begin{equation*}
J'_2\le \frac{C\cdot\Phi(r)}{\sigma}\|f\|_{L^{1,\Phi}}.
\end{equation*}
Combining the above estimates for $J'_1$ and $J'_2$ and taking the supremum over all balls $B(x_0,r)\subseteq\mathbb R^n$ and all $\sigma>0$, we conclude the proof of Theorem 1.5.
\end{proof}

Finally, we remark that for a given real-valued function $b\in BMO(\mathbb R^n)$, by the preceding estimate (3.1), we can see that
\begin{equation*}
\big|\big[b,g^*_{\lambda,\alpha}\big](f)(x)\big|\le\frac{1}{2\pi}\int_0^{2\pi}
g^*_{\lambda,\alpha}\big(e^{-e^{i\theta}b}\cdot f\big)(x)\cdot e^{\cos\theta\cdot b(x)}\,d\theta.
\end{equation*}
Following the idea in \cite{alvarez} and \cite{ding}, we can also prove

\begin{theorem}
Let $0<\alpha\le1$, $1<p<\infty$ and $\lambda>3$. Then the commutator $\big[b,g^*_{\lambda,\alpha}\big]$ is bounded from $L^p(\mathbb R^n)$ into itself whenever $b\in BMO(\mathbb R^n)$.
\end{theorem}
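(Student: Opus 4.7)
The plan is to imitate the strategy used in the paper's proof of Theorem 3.2 for $\mathcal S_\alpha$, which is in turn based on the complex-analytic method of Alvarez et al.\ \cite{alvarez} and Ding \cite{ding}. The starting point is the pointwise domination flagged by the authors just before the statement, obtained by expressing $b(x)-b(z)$ through the Cauchy integral formula applied to $F(\xi)=e^{\xi[b(x)-b(z)]}$ on a circle $|\xi|=\varepsilon$ of small radius $\varepsilon>0$ to be chosen, namely
\begin{equation*}
b(x)-b(z)=\frac{1}{2\pi\varepsilon}\int_{0}^{2\pi} e^{\varepsilon e^{i\theta}[b(x)-b(z)]}\,e^{-i\theta}\,d\theta.
\end{equation*}
Repeating the computation that leads to (3.1) verbatim, with $\mathcal S_\alpha$ replaced by $g^*_{\lambda,\alpha}$, yields the pointwise bound
\begin{equation*}
\bigl|[b,g^*_{\lambda,\alpha}](f)(x)\bigr|\le\frac{1}{2\pi\varepsilon}\int_{0}^{2\pi} g^*_{\lambda,\alpha}\bigl(e^{-\varepsilon e^{i\theta}b}\,f\bigr)(x)\cdot e^{\varepsilon\cos\theta\cdot b(x)}\,d\theta.
\end{equation*}

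Next I would take $L^p(\mathbb R^n)$ norms and apply Minkowski's integral inequality to bring $d\theta$ outside. This reduces the theorem to the $\theta$-uniform estimate
\begin{equation*}
\bigl\|g^*_{\lambda,\alpha}\bigl(e^{-\varepsilon e^{i\theta}b}\,f\bigr)\cdot e^{\varepsilon\cos\theta\cdot b}\bigr\|_{L^{p}}\le C\|f\|_{L^{p}}.
\end{equation*}
Writing $w_\theta(x):=e^{p\varepsilon\cos\theta\cdot b(x)}$, the left-hand side is $\|g^*_{\lambda,\alpha}(e^{-\varepsilon e^{i\theta}b}f)\|_{L^{p}(w_\theta)}$, and the elementary cancellation $\bigl|e^{-\varepsilon e^{i\theta}b(x)}\bigr|^{p}\,w_\theta(x)=1$ shows that $\|e^{-\varepsilon e^{i\theta}b}f\|_{L^{p}(w_\theta)}=\|f\|_{L^{p}}$. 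Thus the whole argument collapses to the weighted boundedness $g^*_{\lambda,\alpha}\colon L^{p}(w_\theta)\to L^{p}(w_\theta)$ with constant independent of $\theta\in[0,2\pi]$.

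To justify this weighted step I would invoke the John--Nirenberg inequality: since $b\in BMO(\mathbb R^n)$, choosing $\varepsilon=\varepsilon(n,p,\|b\|_*)>0$ sufficiently small forces $w_\theta\in A_{p}$ with an $A_{p}$ characteristic uniformly bounded in $\theta$. The weighted $L^{p}$ boundedness of $g^*_{\lambda,\alpha}$ for $\lambda>3$ and Muckenhoupt $A_{p}$ weights is available in \cite{wang1}, so plugging it in and integrating in $\theta$ absorbs the harmless prefactor $1/\varepsilon$ into the final constant and finishes the proof. The main obstacle is precisely this input: one needs the $A_{p}$-weighted bound for $g^*_{\lambda,\alpha}$ in hand, which is why the hypothesis $\lambda>3$ is imposed in the statement, exactly matching the threshold used in Theorem 1.4. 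Everything else is routine once the Cauchy representation has been set up.
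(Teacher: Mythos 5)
Your proposal is correct and follows exactly the route the paper intends: the authors state the pointwise domination via the Cauchy integral formula and then simply cite \cite{alvarez} and \cite{ding} for the remainder, and you have faithfully reconstructed that remainder (Minkowski's inequality in $\theta$, the cancellation $|e^{-\varepsilon e^{i\theta}b}|^p\,e^{p\varepsilon\cos\theta\,b}\equiv 1$, John--Nirenberg to get a uniform $A_p$ bound for small $\varepsilon$, and the weighted $L^p(w)$ boundedness of $g^*_{\lambda,\alpha}$ as the final input). One refinement worth noting: you are right to run the Cauchy integral over a circle of radius $\varepsilon$ rather than radius $1$ as the paper's display (3.1) does; the small radius is what makes the John--Nirenberg step deliver $w_\theta\in A_p$ uniformly in $\theta$ without further normalization of $b$, so your version is actually the more careful one. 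The only slightly soft spot is attributing the weighted $L^p$ bound for $g^*_{\lambda,\alpha}$ with $\lambda>3$ to \cite{wang1}; in the present paper that bound would come from combining the weighted analogue of Theorem A with the pointwise decomposition (4.6) and the weighted versions of Lemmas 4.1--4.3, which together yield a convergent $\sum_j 2^{-j\lambda n/2}\cdot 2^{Cjn}$ precisely when $\lambda>3$ --- a remark worth adding to make the dependence on $\lambda>3$ fully self-contained.
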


Thus, by using the same arguments as in the proof of Theorems 1.3 and 1.4, we can also show the conclusion of Theorem 1.6. The details are omitted here.


\begin{thebibliography}{99}

\bibitem{adams} D. R. Adams, A note on Riesz potentials, Duke Math. J, \textbf{42}(1975), 765--778.
\bibitem{alvarez} J. Alvarez, R. J. Bagby, D. S. Kurtz and C. P\'erez, Weighted estimates for commutators of linear operators, Studia Math, \textbf{104}(1993), 195--209.
\bibitem{chang} S. Y. A. Chang, J. M. Wilson and T. H. Wolff, Some weighted norm inequalities concerning the Schr\"odinger operators, Comment. Math. Helv,  \textbf{60}(1985), 217--246.
\bibitem{chanillo} S. Chanillo and R. L. Wheeden, Some weighted norm inequalities for the area integral, Indiana Univ. Math. J, \textbf{36}(1987), 277--294.
\bibitem{chiarenza} F. Chiarenza and M. Frasca, Morrey spaces and Hardy-Littlewood maximal function, Rend. Math. Appl, \textbf{7}(1987), 273--279.
\bibitem{ding} Y. Ding, S. Z. Lu and K. Yabuta, On commutators of Marcinkiewicz integrals with rough kernel, J. Math. Anal. Appl, \textbf{275}(2002), 60--68.
\bibitem{duoand} J. Duoandikoetxea, Fourier Analysis, American Mathematical Society, Providence, Rhode Island, 2000.
\bibitem{fan} D. S. Fan, S. Z. Lu and D. C. Yang, Regularity in Morrey spaces of strong solutions to nondivergence
    elliptic equations with VMO coefficients, Georgian Math. J, \textbf{5}(1998), 425--440.
\bibitem{fazio1} G. Di Fazio and M. A. Ragusa, Interior estimates in Morrey spaces for strong solutions to
    nondivergence form equations with discontinuous coefficients, J. Funct. Anal, \textbf{112}(1993), 241--256.
\bibitem{fazio2} G. Di Fazio, D. K. Palagachev and M. A. Ragusa, Global Morrey regularity of strong solutions to the
    Dirichlet problem for elliptic equations with discontinuous coefficients, J. Funct. Anal, \textbf{166}(1999),
    179--196.
\bibitem{huang} J. Z. Huang and Y. Liu, Some characterizations of weighted Hardy spaces, J. Math. Anal. Appl, \textbf{363}(2010), 121--127.
\bibitem{john} F. John and L. Nirenberg, On functions of bounded mean oscillation, Comm. Pure Appl. Math,
    \textbf{14}(1961), 415--426.
\bibitem{lu} S. Z. Lu, D. C. Yang and Z. S. Zhou, Sublinear operators with rough kernel on generalized Morrey spaces, Hokkaido Math. J, \textbf{27}(1998), 219--232.
\bibitem{mizuhara} T. Mizuhara, Boundedness of some classical operators on generalized Morrey spaces, Harmonic Analysis, ICM-90 Satellite Conference Proceedings, Springer-Verlag, Tokyo, (1991), 183--189.
\bibitem{morrey} C. B. Morrey, On the solutions of quasi-linear elliptic partial differential equations, Trans. Amer.
    Math. Soc, \textbf{43}(1938), 126--166.
\bibitem{peetre} J. Peetre, On the theory of $\mathcal L_{p,\lambda}$ spaces, J. Funct. Anal, \textbf{4}(1969),
    71--87.
\bibitem{stein} E. M. Stein, Singular Integrals and Differentiability Properties of Functions, Princeton Univ. Press, Princeton, New Jersey, 1970.
\bibitem{torchinsky} A. Torchinsky, Real-Variable Methods in Harmonic Analysis, Academic Press, New York, 1986.
\bibitem{wang4} H. Wang, Boundedness of intrinsic square functions on the weighted weak Hardy spaces, preprint, 2012.
\bibitem{wang1} H. Wang, Intrinsic square functions on the weighted Morrey spaces, J. Math. Anal. Appl, to appear.
\bibitem{wang2} H. Wang and H. P. Liu, The intrinsic square function characterizations of weighted Hardy spaces, Illinois J. Math, to appear.
\bibitem{wang3} H. Wang and H. P. Liu, Weak type estimates of intrinsic square functions on the weighted Hardy spaces, Arch. Math., \textbf{97}(2011), 49--59.
\bibitem{wilson1} M. Wilson, The intrinsic square function, Rev. Mat. Iberoamericana, \textbf{23}(2007), 771--791.
\bibitem{wilson2} M. Wilson, Weighted Littlewood-Paley Theory and Exponential-Square Integrability, Lecture Notes in Math, Vol 1924, Springer-Verlag, 2007.

\end{thebibliography}
\end{document}